\documentclass[10pt]{amsart}
\usepackage{amssymb,amsmath}
\usepackage{calrsfs}
\usepackage{verbatim}
\usepackage[mathscr]{eucal}
\setlength{\hoffset}{-1.2cm}
\setlength{\textwidth}{6in}
\usepackage{pgf}
\def\mL{L\kern-0.08cm\char39}
\def\ml{l\kern-0.0035cm\char39\kern-0.03cm}
\newtheorem{thm}{Theorem}[section]
\newtheorem{lemma}[thm]{Lemma}
\newtheorem{claim}{\sc Claim}[thm]
\newtheorem*{claimnn}{\sc Claim}
\newtheorem{ex}[thm]{Example}
\newtheorem{prop}[thm]{Proposition}
\newtheorem{cor}[thm]{Corollary}
\theoremstyle{definition}

\theoremstyle{remark}

\pagestyle{headings}

\begin{document}

\title{Completeness and related properties of the graph topology on function spaces}
\author{\mL ubica Hol\'a and L\'aszl\'o Zsilinszky}
\address{Academy of Sciences, Institute of Mathematics, \v
Stef\'anikova 49, 81473 Bratislava, Slovakia}
\email{hola@mat.savba.sk}
\address{Department of Mathematics and Computer Science, The University of
North Carolina at Pembroke, Pembroke, NC 28372, USA}
\email{laszlo@uncp.edu}

\subjclass[2010]{Primary 54C35; Secondary 54E52, 54B20, 54C05}

\keywords{graph topology, fine topology, hereditarily Baire space, Banach-Mazur game, strong Choquet game, (strongly) $\alpha$-favorable space, pseudocomplete space, countable compact, pseudocompact}

\thanks{The first author would like to thank the support of the grant APVV-0269-11 and  Vega 2/0018/13.}

\begin{abstract} The graph topology $\tau_{\Gamma}$ is the topology on the space $C(X)$ of all continuous functions defined on a Tychonoff space $X$ inherited from the Vietoris topology on $X\times \mathbb R$ after identifying continuous functions with their graphs. It is shown that all  completeness  properties between complete metrizability and hereditary Baireness coincide for the graph topology if and only if $X$ is countably compact;  however, the graph topology is  $\alpha$-favorable in the strong Choquet game, regardless of $X$.  Analogous results are obtained for the fine topology on $C(X)$. Pseudocompleteness,  along with  properties related to 1st and 2nd countability of $(C(X),\tau_{\Gamma})$ are also investigated.

\end{abstract}

\maketitle

\section{Preliminaries}
There have been a plethora of topologies studied on the space $C(X,Y)$ of continuous functions $f:X\to Y$; most of these topologies can be described as having base elements  of the form
\[F_U=\{f\in C(X,Y): \text{graph}(f)\subseteq U\},\]
where $U$ ranges over a specific collection of open subsets of $X\times Y$.
The best known topologies of this kind are the topology of pointwise convergence $\tau_p$, the compact-open topology $\tau_k$, the topology of uniform convergence $\tau_u$, and the fine topology  $\tau_w$ \cite{No}. We will be interested in the finest of these topologies, the so-called {\it graph topology} $\tau_{\Gamma}$ of {\it Naimpally} \cite{Na}, which is generated by the sets $F_U$ when $U$ ranges over all the nonempty open subsets  of $X\times Y$. The symbol $C_{\Gamma}(X,Y)$ will stand for the space $(C(X,Y),\tau_{\Gamma})$, and we will write $C_{\Gamma}(X)$ instead of  $C_{\Gamma}(X,\mathbb R)$. Denote by $cl_{\Gamma}(A)$ the $\tau_{\Gamma}$-closure of $A\subset C(X,Y)$.
It is the purpose of this paper to investigate completeness and related topological properties
 of the graph topology. It will be shown (Theorem  \ref{hbg}) that $C_{\Gamma}(X)$ is  hereditarily Baire iff $C_{\Gamma}(X)$ is completely metrizable iff $X$ is countably complete; so even the weakest closed hereditary  completeness property - hereditary Baireness - of $C_{\Gamma}(X)$ imposes a strong restriction on $X$, however, there is another strong (non-closed hereditary) completeness property, that of strong $\alpha$-favorability, which $C_{\Gamma}(X)$ always possesses regardless of $X$ (Theorem \ref{choquetg}). Analogous results are established for the  fine topology on $C(X)$ (Theorems \ref{hbw}, \ref{choquetw}).
Various (non-completeness) properties, from Arhange\ml skii's $p$-spaces property to sequentiality, countable tightness, or the $k$-space property for $C_{\Gamma}(X)$ are shown to be  equivalent to  (complete) metrizability of $C_{\Gamma}(X)$ (Theorem \ref{metr}).
As a byproduct of these results, strongly $\alpha$-favorable spaces can be constructed lacking all the discussed topological properties.
Pseudocompleteness, along with properties related to the 2nd countability of the graph topology are also investigated. 

There are various ways of looking at the graph topology, for example,  if $X$ is $T_1$, and $Y$ is $T_2$, then $C_{\Gamma}(X,Y)$  is the relative Vietoris topology \cite{Mi} inherited from the hyperspace of nonempty closed subsets of $X\times Y$ after identifying  elements of $C(X,Y)$ with their graphs. Indeed, since the $F_U$'s form the upper Vietoris topology on $C(X,Y)$, we just need to show that a typical lower Vietoris open set is open in $C_{\Gamma}(X,Y)$: let $V,W$ be open subsets of $X,Y$ respectively,  then
\[\{f\in C(X,Y): \text{graph}(f)\cap (V\times W)\neq\emptyset\}=\bigcup_{x\in V} F_{(X\times W)\cup ((X\setminus \{x\})\times Y)}\in\tau_{\Gamma}.\]
Note that  other  hyperspace topologies also coincide with the graph topology on $C(X,Y)$, as was demonstrated in \cite{BN}.

Another useful way of looking at $\tau_{\Gamma}$ was first observed by {\it van Douwen} \cite[Lemma 8.3.]{vD} for $C_{\Gamma}(X)$, since his proof works for any metrizable $Y$, we will state it in this more general form.
First introduce some notation: denote by $C^+(X)$ (resp. $LSC^+(X)$), the strictly positive real-valued continuous (resp. lower semicontinuous) functions defined on the topological space $X$. Given a function $\varepsilon:X\to (0,\infty)$, a metric space $(Y,d)$, and $f\in C(X,Y)$, define
\[B(f,\varepsilon) = \{g \in C(X, Y ) :  d(f(x), g(x)) < \varepsilon(x) \ \text{for all} \ x\in X\}.\]

\begin{prop}\label{graphbase}
Let $X$ be a topological space, and $(Y,d)$ a metric space. The collection
\[\mathcal B_{\Gamma}=\{B(f,\varepsilon): f\in C(X,Y), \varepsilon\in LSC^+(X)\}\] is a base for $C_{\Gamma}(X,Y)$.
\end{prop}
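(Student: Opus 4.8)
The plan is to verify the two defining properties of a base for $\tau_\Gamma$: first, that every $B(f,\varepsilon)\in\mathcal B_\Gamma$ is $\tau_\Gamma$-open, and second, that $\mathcal B_\Gamma$ refines the canonical base $\{F_U\}$, i.e.\ that for each open $U\subseteq X\times Y$ and each $f\in F_U$ there is a member $B(f,\varepsilon)$ with $f\in B(f,\varepsilon)\subseteq F_U$. Here I use that the $F_U$ really do form a base, which follows from the identity $F_{U_1}\cap F_{U_2}=F_{U_1\cap U_2}$. Throughout, $B_d(y,r)$ denotes the open $d$-ball in $Y$.

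For the first property I would note that $B(f,\varepsilon)$ is itself of the form $F_U$: setting
\[U_{f,\varepsilon}=\{(x,y)\in X\times Y: d(f(x),y)<\varepsilon(x)\},\]
one checks directly that $\mathrm{graph}(g)\subseteq U_{f,\varepsilon}$ precisely when $g\in B(f,\varepsilon)$. It then remains to see that $U_{f,\varepsilon}$ is open. This holds because $(x,y)\mapsto\varepsilon(x)-d(f(x),y)$ is lower semicontinuous on $X\times Y$: the map $(x,y)\mapsto d(f(x),y)$ is continuous (as $f$ and $d$ are), the map $(x,y)\mapsto\varepsilon(x)$ is lower semicontinuous (it is $\varepsilon\in LSC^+(X)$ composed with the continuous projection), and a lower semicontinuous function minus a continuous one is again lower semicontinuous; hence $U_{f,\varepsilon}=\{(x,y):\varepsilon(x)-d(f(x),y)>0\}$ is open.

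The second property is where the real difficulty lies. The tempting choice is to let $\varepsilon(x)$ be the supremum of all $r>0$ with $\{x\}\times B_d(f(x),r)\subseteq U$ (capped at $1$ to keep it real-valued); this is strictly positive and does yield $B(f,\varepsilon)\subseteq F_U$, but I expect establishing its lower semicontinuity to be the main obstacle. Indeed, without compactness of closed balls in $Y$ this supremum need not be lower semicontinuous: at a point where $\varepsilon(x_0)>a$ one only knows $\{x_0\}\times\overline{B}_d(f(x_0),a)\subseteq U$, and since this slice need not be compact, dents in $U$ at points of it lying far from the graph cannot be transferred to nearby slices by a tube-lemma argument.

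To sidestep this, I would build $\varepsilon$ by hand as a supremum of lower semicontinuous bumps. For each $x_0\in X$, since $(x_0,f(x_0))$ is interior to $U$, choose an open $V_{x_0}\ni x_0$ and $\delta_{x_0}>0$ with $V_{x_0}\times B_d(f(x_0),2\delta_{x_0})\subseteq U$, then shrink $V_{x_0}$ using continuity of $f$ so that $f(V_{x_0})\subseteq B_d(f(x_0),\delta_{x_0})$; the triangle inequality gives $\{x\}\times B_d(f(x),\delta_{x_0})\subseteq U$ for every $x\in V_{x_0}$. Putting $c_{x_0}=\min(\delta_{x_0},1)$ and
\[\varepsilon(x)=\sup\{c_{x_0}:x_0\in X,\ x\in V_{x_0}\}\]
produces a function that is strictly positive (as $x\in V_x$), real-valued ($\le 1$), and lower semicontinuous, being the pointwise supremum of the functions $c_{x_0}\chi_{V_{x_0}}$, each lower semicontinuous since $V_{x_0}$ is open. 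Finally, if $g\in B(f,\varepsilon)$ and $x\in X$, then $d(f(x),g(x))<\varepsilon(x)$ forces $d(f(x),g(x))<c_{x_0}\le\delta_{x_0}$ for some $x_0$ with $x\in V_{x_0}$, whence $(x,g(x))\in\{x\}\times B_d(f(x),\delta_{x_0})\subseteq U$. Thus $\mathrm{graph}(g)\subseteq U$, so $B(f,\varepsilon)\subseteq F_U$, which completes the verification.
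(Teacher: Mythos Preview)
Your proof is correct. The paper itself does not supply a proof of this proposition: it attributes the result to van Douwen \cite[Lemma~8.3]{vD} (stated there for $Y=\mathbb R$) and merely records the extension to metric $Y$, so there is no in-paper argument to compare against. Your two-step verification is the natural one; in particular, building $\varepsilon$ as the pointwise supremum of the bumps $c_{x_0}\chi_{V_{x_0}}$ cleanly sidesteps the lower-semicontinuity issue you correctly flag with the ``maximal vertical radius'' function, which can indeed fail to be lower semicontinuous when closed $d$-balls in $Y$ are not compact.
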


The previous result shows how close the graph topology is  to the  {\it fine topology} $\tau_w$ on $C(X,Y)$ with $(Y,d)$ metric, in short denoted as $C_w(X,Y)$; indeed,  $C_w(X,Y)$ also has base elements  of the form $B(f,\varepsilon)$, but with $\varepsilon\in C^+(X)$. The fine topology (also called $m$-topology \cite{vD}) has been thoroughly investigated in the past, see \cite{MN}, \cite{MC1}, \cite{DHHM}, \cite{HM}. In particular, it is known that $C_w(X,Y)$ is sensitive to the metric $d$ of the range space $Y$ \cite{MN}, which immediately shows a difference with the graph topology, as $\tau_{\Gamma}$ is clearly independent of the compatible metrics of $Y$ (for  completeness, when $Y=\mathbb R$, we will assume that $\mathbb R$ carries the Euclidean metric). Moreover, the following is not hard to see:

\begin{prop}\cite{vD}\label{w=g} Let $X$ be a topological space, and $Y$ a metric space.
The following are equivalent:
\begin{enumerate}
\item $C_w(X,Y)=C_{\Gamma}(X,Y)$,
\item $X$ is a $cb$-space, i.e. for each $\varepsilon\in LSC^+(X)$ there is some $\varphi\in C^+(X)$ with $\varepsilon(x)\ge \varphi(x)$ for all $x\in X$.
\end{enumerate}
\end{prop}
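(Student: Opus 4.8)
The plan is to read the equivalence as a comparison of the two topologies through their bases. By Proposition \ref{graphbase} the sets $B(f,\varepsilon)$ with $\varepsilon\in LSC^+(X)$ form a base for $\tau_{\Gamma}$, whereas the sets $B(f,\varphi)$ with $\varphi\in C^+(X)$ form a base for $\tau_w$. Since every $\varphi\in C^+(X)$ lies in $LSC^+(X)$, each basic $\tau_w$-open set is already $\tau_{\Gamma}$-open, so $\tau_w\subseteq\tau_{\Gamma}$ holds for \emph{any} metric $Y$. Thus $C_w(X,Y)=C_{\Gamma}(X,Y)$ amounts to the single reverse inclusion $\tau_{\Gamma}\subseteq\tau_w$, and I would prove the two implications separately.

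For (2)$\Rightarrow$(1) I would fix a basic $\tau_{\Gamma}$-open set $B(f,\varepsilon)$ with $\varepsilon\in LSC^+(X)$ and an arbitrary point $g\in B(f,\varepsilon)$, and produce a $\tau_w$-neighborhood of $g$ contained in it. The natural device is the slack function $\delta(x)=\varepsilon(x)-d(f(x),g(x))$. As $x\mapsto d(f(x),g(x))$ is continuous and $\varepsilon$ is lower semicontinuous, $\delta$ is lower semicontinuous, and it is strictly positive precisely because $g\in B(f,\varepsilon)$; hence $\delta\in LSC^+(X)$. The $cb$-property then supplies $\varphi\in C^+(X)$ with $\varphi\le\delta$, and a one-line triangle-inequality estimate shows $B(g,\varphi)\subseteq B(f,\varepsilon)$: any $h\in B(g,\varphi)$ satisfies $d(f(x),h(x))\le d(f(x),g(x))+d(g(x),h(x))<d(f(x),g(x))+\delta(x)=\varepsilon(x)$. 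Since $B(g,\varphi)$ is a basic $\tau_w$-neighborhood of $g$, this yields $\tau_{\Gamma}\subseteq\tau_w$.

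For (1)$\Rightarrow$(2) I would start from an arbitrary $\varepsilon\in LSC^+(X)$ and the constant function $f\equiv y_0$. Then $B(f,\varepsilon)$ is $\tau_{\Gamma}$-open, hence $\tau_w$-open by (1), and it contains $f$, so there is $\varphi\in C^+(X)$ with $B(f,\varphi)\subseteq B(f,\varepsilon)$. It remains to upgrade this inclusion of neighborhoods into the pointwise inequality $\varphi\le\varepsilon$ required by the $cb$-property. I would argue by contradiction: if $\varphi(x_0)>\varepsilon(x_0)$ at some $x_0$, I would exhibit $h\in B(f,\varphi)\setminus B(f,\varepsilon)$, that is, a continuous $h$ with $d(f(x),h(x))<\varphi(x)$ everywhere yet $d(f(x_0),h(x_0))\ge\varepsilon(x_0)$. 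For $Y=\mathbb R$ this is immediate: choose a constant $\alpha$ with $\varepsilon(x_0)/\varphi(x_0)<\alpha<1$ and set $h=\alpha\varphi$, so that $|h|=\alpha\varphi<\varphi$ while $|h(x_0)|=\alpha\varphi(x_0)>\varepsilon(x_0)$, the desired contradiction.

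The main obstacle is exactly this last construction for a general metric range $Y$, where one cannot rescale values inside $Y$: the argument must realize, at each $x$ and continuously in $x$, a point of $Y$ whose distance from $f(x)$ lies strictly between a fixed fraction of $\varphi(x)$ and $\varphi(x)$. This is automatic for $Y=\mathbb R$ (the case relevant to the rest of the paper) and, more generally, whenever $Y$ is nondegenerate enough to admit such intermediate distances. I would therefore carry out the details for $Y=\mathbb R$ and merely indicate the mild nondegeneracy of $Y$ needed in the general statement; everything else — the base description, the semicontinuity of $\delta$, and the triangle-inequality estimate — is routine.
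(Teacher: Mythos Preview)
The paper does not supply its own proof of this proposition: it is cited to \cite{vD} and introduced only with the phrase ``not hard to see,'' so there is nothing in the text to compare your argument against directly.

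On its merits, your proof is sound. The direction (2)$\Rightarrow$(1) via the slack function $\delta(x)=\varepsilon(x)-d(f(x),g(x))$ and the triangle inequality is the natural argument. For (1)$\Rightarrow$(2) your construction with $f\equiv 0$ and $h=\alpha\varphi$ is correct for $Y=\mathbb R$, which is the setting of van Douwen's original result and of every subsequent use of the proposition in the paper. Your caveat about a general metric range $Y$ is not a gap in your reasoning but a legitimate observation: the proposition as printed is slightly overstated, since for a one-point space $Y$ condition~(1) holds trivially while condition~(2) is a property of $X$ alone that may fail, and for arbitrary $Y$ the required witness $h$ cannot always be manufactured. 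Restricting to $Y=\mathbb R$, or imposing a mild nondegeneracy hypothesis on $Y$ (for instance that $Y$ contains an isometric copy of a nontrivial interval), removes the difficulty.
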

Note that a normal space is a $cb$-space iff it is countably paracompact \cite{Ma}, and there are non-normal, locally compact, countably paracompact, non-$cb$-spaces \cite[p. 240]{MJ}.

It was shown in \cite{Na}, that if $X$ is $T_1$, and $Y$ contains at least two points, then $C_{\Gamma}(X,Y)$ is $T_1$ (resp. $T_2$) iff $Y$ is  $T_1$ (resp. $T_2$). There are no such nice characterizations available for regularity of $C_{\Gamma}(X,Y)$, it is easy to see that regularity of $Y$ is necessary for regularity of $C_{\Gamma}(X,Y)$,  but it is not sufficient:

\begin{ex}
Let $X=\omega_1$, and $Y=\omega_1+1$, both with the order topology. Then $C_{\Gamma}(X,Y)$ is not regular.
\end{ex}

\begin{proof}
Let $f:X\to Y$ be the identity function, and consider its $\tau_{\Gamma}$-neighborhood $F_{\omega_1\times\omega_1}$. Let $U\subseteq\omega_1\times\omega_1$ be any open subset with   $f\in F_{U}$. Since the graph of $f$ is the diagonal of $\omega_1\times\omega_1$, we can find an $\alpha<\omega_1$ so that $(x,y)\in U$ whenever $x,y>\alpha$. Define $g\in C(X,Y)$ via
\[g(x)=\begin{cases} x, &\text{if} \ x\neq \alpha+1
\\ \omega_1, &\text{if} \ x= \alpha+1.\end{cases}
\]
It is not hard to see that $g\in cl_{\Gamma}(F_U) \setminus F_{\omega_1\times\omega_1}$.
\end{proof}

As for an easy sufficient condition for regularity of $C_{\Gamma}(X,Y)$ we can  assume that $X\times Y$ is $T_4$, since then the entire hyperspace of the nonempty closed subsets of $X\times Y$ with the Vietoris topology is regular \cite{Mi}. If $X\times Y$ is non-normal, $C_{\Gamma}(X,Y)$ may be non-regular (see the previous example), but it also can be regular (since, if $X$ is compact and $Y$ is regular, then  $C_{\Gamma}(X,Y)$ is regular \cite{Le}). Fortunately, most of our results concern $C_{\Gamma}(X)$, which is a topological group, and hence a Tychonoff space, if $X$ is $T_1$.

\section{Hereditary Baireness  of the graph and fine topologies}

One can argue that, aside from compactness, the strongest closed hereditary property is complete metrizability, and the weakest such property is hereditary Baireness. Recall that $Z$ is {\it hereditary Baire} iff nonempty closed subspaces of $Z$ are of the 2nd category in themselves iff nonempty closed subspaces of $Z$ are Baire spaces; moreover, $Z$ is a {\it Baire space} \cite{HMC} iff nonempty open subsets are of the 2nd category in themselves iff a countable dense open collection in $Z$ has a dense intersection.

Extending results of \cite{HH}, and \cite{MC1}, \cite{DHHM}, we will show that for the graph and fine topologies these properties coincide, and so does any other closed hereditary completeness property in-between them. We just include two such well-studied properties, namely   {\it \v Cech completeness}  (being a $G_{\delta}$-subspace in a compactification \cite{En}), and {\it sieve completeness} (being a continuous open image of a \v Cech complete space \cite{WW}).

\begin{thm}\label{hbg}
 Let $X$ be a Tychonoff space. The following are equivalent:
 \begin{enumerate}
 \item $C_{\Gamma}(X)$ is completely metrizable,
 \item $C_{\Gamma}(X)$ is \v Cech complete,
 \item $C_{\Gamma}(X)$ is sieve complete,
 \item  $C_{\Gamma}(X)$ is hereditarily Baire,
 \item $X$ is countably compact.
 \end{enumerate}
 \end{thm}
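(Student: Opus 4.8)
The plan is to prove the cycle $(5)\Rightarrow(1)\Rightarrow(2)\Rightarrow(3)\Rightarrow(4)\Rightarrow(5)$. The three middle implications $(1)\Rightarrow(2)\Rightarrow(3)\Rightarrow(4)$ require nothing special about $C_{\Gamma}(X)$: they are the standard descent through the hierarchy of completeness properties (complete metrizability $\Rightarrow$ \v Cech completeness $\Rightarrow$ sieve completeness $\Rightarrow$ hereditary Baireness), the final step using that sieve-complete spaces are hereditarily Baire. So the weight of the theorem rests on the two outer implications.

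For $(5)\Rightarrow(1)$ I would show that countable compactness forces $\tau_{\Gamma}$ to collapse onto the topology $\tau_u$ of uniform convergence. The key observation is that every $\varepsilon\in LSC^+(X)$ attains a strictly positive minimum on a countably compact $X$: the sublevel sets $\{x:\varepsilon(x)\le \inf_X\varepsilon+1/j\}$ form a decreasing sequence of nonempty closed sets, whose intersection is nonempty by countable compactness, and any point of it realizes $m:=\min_X\varepsilon>0$. Consequently each basic neighborhood $B(f,\varepsilon)$ of Proposition~\ref{graphbase} contains the uniform ball $B(f,m)$, while conversely every uniform ball is of the form $B(f,\varepsilon)$ with $\varepsilon$ a positive constant in $C^+(X)$; hence $\tau_{\Gamma}=\tau_u$. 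Since $\tau_u$ is induced by the complete metric $\rho(f,g)=\min\{1,\sup_{x}|f(x)-g(x)|\}$, the space $C_{\Gamma}(X)$ is completely metrizable.

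The substance of the theorem is $(4)\Rightarrow(5)$, which I would prove contrapositively: if $X$ is not countably compact I will exhibit a closed subspace of $C_{\Gamma}(X)$ that is meager in itself, hence not Baire. Fix a countably infinite closed discrete set $D=\{x_n:n\in\omega\}$ (the range of a sequence with no cluster point), and put $A=\{g\in C(X):g(x_n)=0 \text{ for all but finitely many } n\}$ together with $A_k=\{g\in C(X):g(x_n)=0 \text{ for all } n\ge k\}$, so that $A=\bigcup_k A_k$. Each $A_k$ is even $\tau_p$-closed, hence $\tau_{\Gamma}$-closed; and $A$ itself is $\tau_{\Gamma}$-closed because its complement is open: given $g$ with $g(x_{n_j})\ne 0$ along an infinite set, the function equal to $\min\{1,|g(x_{n_j})|/2\}$ at each $x_{n_j}$ and to $1$ elsewhere lies in $LSC^+(X)$ and pins every function in the corresponding $B(g,\varepsilon)$ away from $0$ at all the $x_{n_j}$.

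The crux is to verify that each $A_k$ is nowhere dense in $A$, and this is where the main obstacle lies. Given $g\in A_k$ and a basic neighborhood $B(g,\varepsilon)$, I must find $h\in A\setminus A_k$ inside it; but since $X$ is only Tychonoff (and may be pseudocompact, so that no unbounded continuous function is available), $\varepsilon$ can have infimum $0$ off $D$, and a perturbation of $g$ at $x_k$ must therefore be localized where $\varepsilon$ stays bounded below. I would resolve this by using complete regularity to choose $\phi\in C(X,[0,1])$ with $\phi(x_k)=1$ that vanishes off the open set $W=\{x:\varepsilon(x)>\varepsilon(x_k)/2\}$ and at every $x_n$ with $n\ne k$; then $h=g+c\phi$ with $0<|c|<\varepsilon(x_k)/2$ satisfies $|h-g|=|c|\phi<\varepsilon$ everywhere, lies in $A$, yet has $h(x_k)=c\ne 0$, so $h\notin A_k$. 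Thus each $A_k$ is closed with empty interior in $A$, the closed subspace $A$ is meager in itself, and $C_{\Gamma}(X)$ fails to be hereditarily Baire. Note that only bounded perturbations are used, which is precisely what lets the argument survive in the pseudocompact-but-not-countably-compact case.
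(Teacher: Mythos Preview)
Your proposal is correct and follows essentially the same approach as the paper: the same cycle of implications, the same reduction of $\tau_{\Gamma}$ to $\tau_u$ via the positive minimum of lower semicontinuous functions on countably compact spaces, and the same meager closed subspace $A=\bigcup_k A_k$ built from a closed discrete sequence (the paper's $H=\bigcup_n H_n$). The only cosmetic difference is that you work throughout with the base $\mathcal B_{\Gamma}$ of Proposition~\ref{graphbase}, whereas the paper phrases the nowhere-density step with the $F_V$ base; your localized bump supported in $\{\varepsilon>\varepsilon(x_k)/2\}$ is exactly the paper's construction translated into that language.
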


\begin{proof} The implications (1)$\Rightarrow$(2)$\Rightarrow$(3)$\Rightarrow$(4) are well-known.

(4)$\Rightarrow$(5)
If $X$ is not countably compact, there is a  countable closed discrete set $\{x_n: n<\omega\}$. For each $n<\omega$ define
\[H_n=\{f\in C(X): \ \forall k\ge n, \ f(x_k)=0 \},\]
and put $H=\bigcup_{n<\omega} H_n$. Then

$\bullet$ {\it  $H$ is closed in $C_{\Gamma}(X)$}:
if  $f\in C(X)\setminus H$, then $f(x_n)\neq 0$ for  infinitely many $n$ (w.l.o.g., all $n$). Define
$U=X\times \mathbb R \setminus \{x_n:n<\omega\}\times \{0\}$; then $f\in F_U\subseteq C(X)\setminus H$.

$\bullet$ {\it  each $H_n$ is nowhere dense in $H$}:  it is easy to see that $H_n$ is closed in $C_{\Gamma}(X)$;
moreover, assume there is  an $X\times\mathbb R$-open $V$ such that $\emptyset\neq F_V\cap H\subseteq H_n$, and pick $f\in F_V\cap H_n$. Then $f(x_n)=0$, so we can find $\varepsilon>0$ and an $X$-open neighborhood $U$ of $x_n$  missing $\{x_k: k\neq n\}$ such that
\[U\times (-\varepsilon,\varepsilon) \subseteq V \ \text{and} \ \overline{U}\subseteq f^{-1}((-\frac{\varepsilon}{2},\frac{\varepsilon}{2})).\]
Define a continuous function $g_0:X\to [0,\frac{\varepsilon}{2}]$ so that $g_0(x_n)=\frac{\varepsilon}{2}$, and $g_0(x)=0$ whenever $x\notin U$. Then $g=f+g_0\in F_V\cap H\setminus H_n$, a contradiction.

(5)$\Rightarrow$(1)  If $X$ is countably compact, then each $\varepsilon\in LSC^+(X)$ has a (positive) minimum, so  $C_{\Gamma}(X)$ coincides with the  uniform topology on $C(X)$, which is completely metrizable by the sup-metric.
\end{proof}

Results about hereditary Baireness of function spaces are very rare and just partial \cite{GS}, \cite{Bo}. We will  show that it is possible to use the idea of Theorem \ref{hbg} to completely characterize hereditary Baireness of the fine topology as well:

\begin{thm}\label{hbw}
 Let $X$ be a Tychonoff space. The following are equivalent:
 \begin{enumerate}
 \item $C_w(X)$ is completely metrizable,
 \item $C_{w}(X)$ is \v Cech complete,
 \item $C_{w}(X)$ is sieve complete,
 \item  $C_w(X)$ is hereditarily Baire,
 \item $X$ is pseudocompact.
 \end{enumerate}
 \end{thm}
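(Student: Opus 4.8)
The plan is to run the same machinery as in Theorem~\ref{hbg}, replacing countable compactness by pseudocompactness and, correspondingly, closed discrete sets of points by discrete (locally finite) families of nonempty open sets. The implications (1)$\Rightarrow$(2)$\Rightarrow$(3)$\Rightarrow$(4) are again the standard completeness hierarchy (exactly as in Theorem~\ref{hbg}), so the content lies in (5)$\Rightarrow$(1) and (4)$\Rightarrow$(5), which together close the cycle.

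For (5)$\Rightarrow$(1) I would first record the elementary fact that $X$ is pseudocompact iff every $\varphi\in C^+(X)$ is bounded away from $0$: if $f\in C(X)$ is unbounded then $1/(1+|f|)\in C^+(X)$ has infimum $0$, and conversely. Hence, when $X$ is pseudocompact, every base element $B(f,\varepsilon)$ with $\varepsilon\in C^+(X)$ contains $B(f,\delta)$ for the constant $\delta=\inf\varepsilon>0$, so $C_w(X)$ coincides with the uniform topology; since all continuous functions on $X$ are bounded, the sup-metric is a complete metric, giving complete metrizability. This is the precise analogue of the final step of Theorem~\ref{hbg}, with $LSC^+$ and ``minimum'' replaced by $C^+$ and ``positive infimum''.

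For (4)$\Rightarrow$(5) I would argue contrapositively. If $X$ is not pseudocompact, fix a discrete family $\{V_n:n<\omega\}$ of nonempty open sets (e.g. obtained from an unbounded $f\in C(X)$ as preimages of a separated increasing sequence of intervals), choose $x_n\in V_n$, and fix bumps $\varphi_n\in C(X,[0,1])$ with $\varphi_n(x_n)=1$ and $\mathrm{supp}\,\varphi_n\subseteq V_n$. Set $H_n=\{f: f(x_k)=0\ \forall k\ge n\}$ and $H=\bigcup_n H_n$; since $\tau_p\subseteq\tau_w$ makes evaluations $\tau_w$-continuous, each $H_n$ is $\tau_w$-closed. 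The two claims to verify are that $H$ is $\tau_w$-closed and that each $H_n$ is nowhere dense in $H$; then $H$ is a closed subspace that is meager in itself, so $C_w(X)$ fails to be hereditarily Baire.

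The main obstacle — the only genuinely new point compared with Theorem~\ref{hbg} — is manufacturing the continuous width functions, where discreteness of the open family (the pseudocompactness witness), rather than a mere closed discrete set of points, is essential to keep these $C^+$-valued widths continuous and positive. For closedness of $H$, given $f$ with $f(x_n)\neq0$ for all $n$ in an infinite set $N$, I would take $\varepsilon=1-\sum_{n\in N}(1-\min(|f(x_n)|,1))\varphi_n$; discreteness makes this a locally one-term sum, hence continuous and strictly positive, while $\varepsilon(x_n)\le|f(x_n)|$ forces $g(x_n)\neq0$ for every $g\in B(f,\varepsilon)$ and $n\in N$, so $B(f,\varepsilon)\cap H=\emptyset$. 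For nowhere density, given $f\in H_n$ and any $\varepsilon\in C^+(X)$, I would perturb by a bump $g_0\ge0$ supported in a neighborhood $U\subseteq V_n$ of $x_n$ on which $\varepsilon>\varepsilon(x_n)/2$, scaled below $\varepsilon(x_n)/2$; then $g=f+g_0$ lies in $B(f,\varepsilon)\cap H_{n+1}\subseteq B(f,\varepsilon)\cap H$, yet $g(x_n)\neq0$ puts it outside $H_n$, witnessing that $H_n$ has empty interior in $H$.
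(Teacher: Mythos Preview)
Your proof is correct and follows essentially the same strategy as the paper: build a closed, meager-in-itself subspace $H=\bigcup_n H_n$ from a discrete family of open sets witnessing non-pseudocompactness, and observe that pseudocompactness collapses $\tau_w$ to $\tau_u$. The only noteworthy difference is cosmetic: the paper takes $H_n=\{f: f(U_k)=\{0\}\ \forall k\ge n\}$ (vanishing on the whole open sets) and builds the separating $\varepsilon$ piecewise on the $\overline{U_n}$, whereas you keep the pointwise condition $f(x_k)=0$ from Theorem~\ref{hbg} and assemble $\varepsilon$ as a bump-function sum---both choices work and yield the same conclusion.
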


\begin{proof} The implications (1)$\Rightarrow$(2)$\Rightarrow$(3)$\Rightarrow$(4) are well-known.

 (4)$\Rightarrow$(5)
If $X$ is not pseudocompact, there is a  countable collection  $\{U_n:n<\omega\}$ of open sets such that $\{\overline{U_n}:n<\omega\}$ is discrete. For $n<\omega$ fix some $x_n\in U_n$, and define
\[H_n=\{f\in C(X): \ \forall k\ge n \ f(U_k)=\{0\} \}.\]

\begin{claim}
$H_n$ is nowhere dense in $H=\bigcup_{n<\omega} H_n$:
\end{claim}

It is easy to see that $H_n$ is closed in $C_w(X)$. Let $f\in H_n$, and consider $B(f,\varepsilon)$ for some $\varepsilon\in C^+(X)$. Then $V=U_n\cap \varepsilon^{-1}((\frac23 \varepsilon(x_n),\infty))$ is an open neighborhood of $x_n$. Let $g_0:X\to [0,\frac{\varepsilon(x_n)}{2}]$ be a continuous function, such that $g_0(x_n)=\frac{\varepsilon(x_n)}{2}$, and $g_0(x)=0$ whenever $x\notin V$; then $g=f+g_0\in H_{n+1}$, since $g(x_n)=g_0(x_n)>0$, and $g=f$ outside of $V$, so $g(U_k)=\{0\}$ whenever $k\ge n+1$. We have

$\bullet$ $B(g,\frac{\varepsilon}{4})\subseteq B(f,\varepsilon)$

[Let $h\in B(g,\frac{\varepsilon}{4})$. If $x\notin V$, then $|h(x)-f(x)|=|h(x)-g(x)|<\frac{\varepsilon(x)}{4}<\varepsilon(x)$,
so $h\in B(f,\varepsilon)$. If $x\in V$,  then
$|h(x)-f(x)|\le |h(x)-g(x)|+|g(x)-f(x)|<\frac{\varepsilon(x)}{4}+g_0(x)\le \frac{\varepsilon(x)}{4}+\frac{\varepsilon(x_n)}{2}<\varepsilon(x)$,
so $h\in B(f,\varepsilon)$ again.]

$\bullet$ $ B(g,\frac{\varepsilon}{4})$ is disjoint to $H_n$

[If $h\in  B(g,\frac{\varepsilon}{4})$, then $h(x_n)>g(x_n)-\frac{\varepsilon(x_n)}{4}=\frac{\varepsilon(x_n)}{2}-\frac{\varepsilon(x_n)}{4}=\frac{\varepsilon(x_n)}{4}>0$, so $h\notin H_n$.]

\begin{claim}
$H$ is closed in $C_w(X)$:
\end{claim}
let $f\in C(X)\setminus H$. Then $f(U_n)\neq\{0\}$ for  infinitely many $n$ (w.o.l.g., all $n$). Let $u_n\in U_n$ be such that $f(u_n)\neq 0$ for all $n$. Given $n$, define the continuous function $\varepsilon_n:\overline U_n\to [\frac12,\frac{|f(u_n)|}{2}]$ if $1\le |f(u_n)|$, or $\varepsilon_n:\overline U_n\to [\frac{|f(u_n)|}{2},\frac12]$ if $1> |f(u_n)|$ so that $\varepsilon_n(u_n)= \frac{|f(u_n)|}{2}$, and $\varepsilon_n(\overline{U_n}\setminus U_n)=\{\frac12\}$. Finally, define $\varepsilon\in C^+(X)$ as follows:
\[\varepsilon=
\begin{cases}
\varepsilon_n, \ &\text{on} \ \overline{U_n}, \ \text{whenever} \ n<\omega,\\
\frac12, \ &\text{on} \ X\setminus \bigcup_n\overline{U_n}.
\end{cases}
\]
If $h\in B(f,\varepsilon)$, then for each $n\in\omega$
\[|h(u_n)|\ge   |f(u_n)| -|f(u_n)-h(u_n)|>|f(u_n)| - \varepsilon_n(u_n)=|f(u_n)| - \frac{|f(u_n)|}{2}=\frac{|f(u_n)|}{2}>0,\]
so $h\notin H$; thus, $f\in B(f,\varepsilon)\subseteq C(X)\setminus H$.

(5)$\Rightarrow$(1)  If $X$ is pseudocompact, then each $\varepsilon\in C^+(X)$ is bounded away from zero, so   $C_w(X)$ coincides with the  uniform topology on $C(X)$.
\end{proof}

\section{Topological games and the graph and fine topologies}

In the {\it strong Choquet game }  $Ch(Z)$ (cf. \cite{Ch}, \cite{Ke}) players
$\alpha$ and $\beta$ take turns in choosing objects in the topological
space $Z$ with an open base $\mathcal  B$: $\beta$ starts by picking $(z_0,V_0)$
from
$\mathcal E=\{ (z,V)\in Z\times\mathcal  B: \ z\in V\}$
and $\alpha$ responds by
$U_0\in \mathcal  B$ with $z_0\in U_0\subseteq V_0$. The next choice of
$\beta$ is  $(z_1,V_1)\in \mathcal  E$ with $V_1\subset U_0$
and again $\alpha$ picks $U_1$ with $z_1\in U_1\subseteq V_1$ etc. Player
$\alpha$ wins the run $(z_0,V_0),U_0,\dots,(z_n,V_n),U_n,\dots$ provided
$\bigcap_{n} U_n=\bigcap_{n} V_n\neq \emptyset$,
otherwise $\beta$ wins. The space $Z$ is called { \it strongly $\alpha$-favorable}, provided $Ch(Z)$ is $\alpha$-favorable, i.e. when
$\alpha$ has a winning tactic in $Ch(Z)$, which
is a function $t:\mathcal  E\to\mathcal  B$ such that
$\alpha$ wins every run of the game with $U_n=t(z_n,V_n)$ for all $n$.
The {\it Banach-Mazur game} $BM(Z)$ (see \cite{HM}, or the Choquet game in \cite{Ke})
is played as the strong Choquet game, except
 $\beta$'s  choice is only a nonempty open set contained in the previous
choice of $\alpha$. A space $Z$ is  { \it  $\alpha$-favorable}, provided
$\alpha$ has a  winning tactic in $BM(Z)$.
$\beta$-favorability of $Ch(Z)$, and $BM(Z)$ can be defined analogously \cite{Zs1}, \cite{HM}.

The strong Choquet game  is intimately related to the completeness properties considered in Theorems \ref{hbg}, \ref{hbw}: indeed, a Moore space is sieve complete iff it is strongly $\alpha$-favorable \cite[Corollary 3.2]{CP}; a metrizable space is completely metrizable iff  it is strongly $\alpha$-favorable \cite{Ch} (Choquet's Theorem). On the other hand, a Moore space is hereditarily Baire iff the strong Choquet game is not $\beta$-favorable (see \cite[Corollary 3.3]{Zs1}, or \cite{De} for metrizable spaces). Note however, that neither  $\alpha$-favorability, nor  $\beta$-unfavorability of the strong Choquet game is closed hereditary (the Michael line is one example, see \cite{Zs1} for more).
It is clear that strong $\alpha$-favorability implies $\alpha$-favorability, which in turn implies Baireness, since  $Z$ is a Baire space iff $BM(Z)$ is not $\beta$-favorable \cite{Ke}.

\begin{thm}\label{choquetg}
 Let $X$ be  a topological space, and $Y$ be metrizable. The following are equivalent:
 \begin{enumerate}
 \item $C_{\Gamma}(X,Y)$ is strongly $\alpha$-favorable,
 \item $Y$ is completely metrizable.
 \end{enumerate}
 \end{thm}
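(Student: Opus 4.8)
The substantive direction is (2)$\Rightarrow$(1): that completeness of $Y$ forces strong $\alpha$-favorability of $C_{\Gamma}(X,Y)$ with no hypothesis on $X$ (for $Y=\mathbb R$ this is the ``regardless of $X$'' phenomenon of the abstract). The plan is to exhibit an explicit winning tactic for $\alpha$ in $Ch(C_{\Gamma}(X,Y))$, working in the base $\mathcal B_{\Gamma}$ of Proposition \ref{graphbase} and fixing a complete compatible metric $d$ on $Y$. When $\beta$ plays $(\phi,B(f,\varepsilon))$ with $\phi\in B(f,\varepsilon)$, the function $\eta=\varepsilon-d(f,\phi)$ is strictly positive and lower semicontinuous (as $\varepsilon\in LSC^+(X)$ and $d(f,\phi)\in C(X)$), so I let $\alpha$ respond with $U=B(\phi,\delta)$, where $\delta=\tfrac12\min\{\eta,1\}\in LSC^+(X)$. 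A triangle-inequality computation gives $B(\phi,\delta)\subseteq B(\phi,\tfrac12\eta)\subseteq B(f,\varepsilon)$, so the move is legal, and the cap by $1$ keeps the first radius bounded.

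Along a run following this tactic, write $V_n=B(f_n,\varepsilon_n)$, $U_n=B(\phi_n,\delta_n)$, and $\eta_n=\varepsilon_n-d(f_n,\phi_n)$, so that $2\delta_n\le\eta_n$. Since $\phi_{n+1}\in V_{n+1}$ one has $B(\phi_{n+1},\eta_{n+1})\subseteq V_{n+1}\subseteq U_n$, whence the key nesting $B(\phi_{n+1},2\delta_{n+1})\subseteq B(\phi_n,\delta_n)$. I then want to conclude that the pointwise radii shrink, $2\delta_{n+1}(x)\le\delta_n(x)$ for every $x$; granting this and the cap $\delta_0\le\tfrac12$, one gets $\delta_n\le 2^{-n-1}$ uniformly. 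Because $\phi_m\in U_n$ for all $m>n$, we have $d(\phi_n(x),\phi_m(x))<\delta_n(x)\to 0$, so $(\phi_n(x))_n$ is $d$-Cauchy; by completeness it converges to some $\phi(x)\in Y$, uniformly in $x$, so $\phi\in C(X,Y)$. Finally $d(\phi_{n+1}(x),\phi(x))\le\delta_{n+1}(x)<2\delta_{n+1}(x)$ places $\phi\in B(\phi_{n+1},2\delta_{n+1})\subseteq U_n$ for every $n$, so $\phi\in\bigcap_n U_n=\bigcap_n V_n\neq\emptyset$ and $\alpha$ wins.

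The main obstacle is exactly the radius-shrinkage step: passing from the inclusion of function-balls $B(\phi_{n+1},2\delta_{n+1})\subseteq B(\phi_n,\delta_n)$ to the numerical inequality $2\delta_{n+1}(x)\le\delta_n(x)$. For $Y=\mathbb R$ this is immediate: fixing $x_0$, any value $v$ with $|v-\phi_{n+1}(x_0)|<2\delta_{n+1}(x_0)$ is realized as $g(x_0)$ by a continuous $g=\phi_{n+1}+(v-\phi_{n+1}(x_0))\psi$ lying in $B(\phi_{n+1},2\delta_{n+1})$ (with $\psi$ a small bump at $x_0$, using lower semicontinuity of $\delta_{n+1}$), and the inclusion then forces $|v-\phi_n(x_0)|<\delta_n(x_0)$. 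For a general completely metrizable $Y$ the same tactic is used, but this realization of prescribed pointwise values by continuous $Y$-valued functions is where the geometry and completeness of $Y$ must be exploited; this is the technical heart of the argument.

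For (1)$\Rightarrow$(2) the plan is to push $\alpha$'s winning play forward onto $Y$ and invoke Choquet's Theorem (a metrizable space is completely metrizable iff it is strongly $\alpha$-favorable). The natural vehicle is the evaluation $e_{x_0}\colon C_{\Gamma}(X,Y)\to Y$, $f\mapsto f(x_0)$, which is continuous (using constant $\varepsilon$) and onto (via constant functions). If $e_{x_0}$ is open, then $\beta$'s moves in $Ch(Y)$ lift through $e_{x_0}^{-1}$ to legal moves in $Ch(C_{\Gamma}(X,Y))$ while $\alpha$'s responses project back to legal open moves in $Y$, transferring $\alpha$'s winning play to $Y$ and yielding complete metrizability of $Y$. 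The delicate point is the openness of $e_{x_0}$: realizing a value near $f(x_0)$ by a continuous function close to $f$ again requires interpolating $Y$-valued functions, which can degenerate (for instance when $X$ is connected and $Y$ is totally disconnected, so that $C(X,Y)$ collapses to constants); controlling this degeneracy is the main obstacle in this direction.
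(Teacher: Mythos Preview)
For $(2)\Rightarrow(1)$ your tactic is essentially the paper's---the paper has $\alpha$ respond to $(f,B(f,\varepsilon))$ with $B(f,\tfrac12\min\{1,\varepsilon\})$, and your re-centering at $\beta$'s point $\phi$ is harmless bookkeeping. The difficulty you create for yourself is the radius-shrinkage step: you try to pass from the set inclusion $B(\phi_{n+1},2\delta_{n+1})\subseteq B(\phi_n,\delta_n)$ to the pointwise inequality $2\delta_{n+1}(x)\le\delta_n(x)$ by realizing prescribed point values with bump functions. But $X$ is an arbitrary topological space with no separation axioms assumed, so bump functions need not exist, and for general $Y$ you offer only a promissory note. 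The paper takes no such detour: it simply observes that all later centers $f_m$ lie in $B(f_n,\tfrac{\varepsilon_n}{2})$, so that once the $f_n$ are uniformly Cauchy and $f_m\to f$ uniformly, the estimate $d(f_n(x),f(x))\le\tfrac{\varepsilon_n(x)}{2}<\varepsilon_n(x)$ places the limit in every $B(f_n,\varepsilon_n)$. The only ingredient actually needed is that the radii of $\alpha$'s responses tend to $0$ uniformly, and this is something $\alpha$ can simply impose (e.g.\ by capping with $2^{-n}$ at round $n$ rather than with the constant $1$); there is no need to deduce radius comparisons from ball inclusions in $C(X,Y)$.

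For $(1)\Rightarrow(2)$ your route through openness of the evaluation $e_{x_0}$ is different from the paper's and, as you yourself note, genuinely obstructed. The paper instead embeds $Y$ into $C_\Gamma(X,Y)$ via constant functions: this copy of $Y$ is closed in $(C(X,Y),\tau_u)$, hence (since $\tau_u$ is metrizable) $G_\delta$ in $\tau_u$, hence $G_\delta$ in the finer $\tau_\Gamma$. Strong $\alpha$-favorability passes to $G_\delta$ subspaces, so $Y$ is a strongly $\alpha$-favorable metrizable space and therefore completely metrizable by Choquet's Theorem. No openness of any map is required, and the degeneracies you worry about do not arise.
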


\begin{proof} (1)$\Rightarrow$(2) Note that $Y$ embeds as a $G_{\delta}$ in  $C_{\Gamma}(X,Y)$: indeed, $Y$ embeds as a closed subspace of $(C(X,Y),\tau_p)$, thus also as a closed subspace of $(C(X,Y),\tau_u)$ and $\tau_u$ is metrizable. It follows that $Y$ is a strongly $\alpha$-favorable metrizable space, which is equivalent to complete metrizability of $Y$ by Choquet's Theorem \cite{Ch}.

(2)$\Rightarrow$(1) Let $d$ be a compatible complete metric for $Y$. Define a tactic $t$ for $\alpha$ in the strong Choquet game on $C_{\Gamma}(X,Y)$ as  follows: given $U=(f,B(f,\varepsilon))$, put $t(U)=B(f,\frac{\min(1,\varepsilon)}{2})$.

\begin{claimnn}
$t$ is a winning tactic for $\alpha$
\end{claimnn}
\noindent
Let $(f_0,B_0(f_0,\varepsilon_0)),t(f_0,B_0(f_0,\varepsilon_0)),\dots (f_n,B_n(f_n,\varepsilon_n)),t(f_n,B_n(f_n,\varepsilon_n)), \dots$ be a run of the strong Choquet game compatible with $t$. Since $d$ is complete, the sequence $(f_n)$ uniformly converges to some $f\in C(X,Y)$. Also, $f_m\in t(B_n(f_n,\varepsilon_n))\subseteq B_n(f_{n},\frac{\varepsilon_{n}}{2})$ for each $m\ge n$, so $d(f_n(x),f_m(x))<\frac{\varepsilon_{n}(x)}{2}$ for all $x\in X$. Fix some $x\in X$, and choose $m_0\ge n$ with $d(f_{m_0}(x),f(x))<\frac{\varepsilon_n(x)}{2}$. Then
\[d(f(x),f_n(x))\le d(f(x),f_{m_0}(x))+d(f_{m_0}(x),f_n(x))<\frac{\varepsilon_n(x)}{2}+\frac{\varepsilon_n(x)}{2}=\varepsilon_n(x),\]
so $f\in B_n(f_n,\varepsilon_n)$ for each $n$; thus, $\alpha$ wins.
\end{proof}

A similar argument also gives:

\begin{thm}\label{choquetw}
 Let $X$ be  a topological space, and $Y$ be a complete metric space. Then $C_{w}(X,Y)$ is strongly $\alpha$-favorable.
 \end{thm}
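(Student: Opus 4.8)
The plan is to transplant the proof of Theorem \ref{choquetg} essentially verbatim, the only genuinely new point being to check that the tactic used there lands inside the \emph{fine}-topology base rather than the coarser graph-topology base. Here $d$ is a complete metric on $Y$ that is given as part of the data (not something we get to choose), and by definition $C_w(X,Y)$ has the balls $B(f,\varepsilon)$ with $\varepsilon\in C^+(X)$ as a base. I would define the tactic $t$ for $\alpha$ by $t(f,B(f,\varepsilon))=B\!\left(f,\frac{\min(1,\varepsilon)}{2}\right)$, exactly as before. The crucial compatibility check is that $\frac{\min(1,\varepsilon)}{2}\in C^+(X)$ whenever $\varepsilon\in C^+(X)$: the pointwise minimum of two continuous functions is continuous and stays strictly positive, so $t$ really is a tactic in $C_w(X,Y)$. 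This is precisely why the same recipe serves both topologies — in the graph case one instead uses that $\min(1,\varepsilon)\in LSC^+(X)$.

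Next I would take an arbitrary run $(f_0,B_0(f_0,\varepsilon_0)),U_0,(f_1,B_1(f_1,\varepsilon_1)),U_1,\dots$ of $Ch(C_w(X,Y))$ compatible with $t$, so that $U_n=B(f_n,\frac{\min(1,\varepsilon_n)}{2})$ and $B_{n+1}\subseteq U_n\subseteq B_n$. Since the nested choices give $f_m\in U_n$ for every $m>n$, we obtain $d(f_n(x),f_m(x))<\frac{\min(1,\varepsilon_n(x))}{2}\le\frac12$ for all $x\in X$.

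The main work is to upgrade this to \emph{uniform} convergence of $(f_n)$, and this is where the truncation by $1$ earns its place. From the inclusion $B(f_{n+1},\varepsilon_{n+1})\subseteq B(f_n,\frac{\min(1,\varepsilon_n)}{2})$ one forces the radii to contract: testing the inclusion against functions that agree with $f_{n+1}$ off a small neighborhood of a point $x$ and push its value toward the boundary of the ball there shows that $\varepsilon_{n+1}(x)\le\frac{\min(1,\varepsilon_n(x))}{2}$ for every $x$, whence $\min(1,\varepsilon_n)\to 0$ uniformly on $X$. Consequently $\sup_x d(f_n(x),f_m(x))\to 0$, so $(f_n)$ is uniformly Cauchy; completeness of $d$ then yields a uniform limit $f\in C(X,Y)$.

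Finally I would confirm that $f$ is a winning point. Letting $m\to\infty$ in $d(f_n(x),f_m(x))<\frac{\varepsilon_n(x)}{2}$ gives $d(f_n(x),f(x))\le\frac{\varepsilon_n(x)}{2}<\varepsilon_n(x)$ for every $x$ and every $n$, so $f\in B_n(f_n,\varepsilon_n)$ for all $n$. Since the $U_n$'s and $B_n$'s interleave, $\bigcap_n U_n=\bigcap_n B_n\ni f$ is nonempty, so $\alpha$ wins every $t$-compatible run and $C_w(X,Y)$ is strongly $\alpha$-favorable. I expect the only real obstacle to be the uniform-convergence step, i.e. verifying that the ball nesting genuinely forces the radii $\varepsilon_n$ down to $0$; the role of $\min(1,\cdot)$ is exactly to stop a run with large constant radii from destroying Cauchyness, and everything else is the same bookkeeping as in Theorem \ref{choquetg}.
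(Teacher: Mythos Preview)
Your approach is exactly the one the paper intends: its entire proof of Theorem~\ref{choquetw} is the phrase ``a similar argument also gives'', i.e.\ transplant the tactic $t(f,B(f,\varepsilon))=B\!\left(f,\tfrac{\min(1,\varepsilon)}{2}\right)$ from Theorem~\ref{choquetg}, using the given complete metric $d$ on $Y$ and observing that $\tfrac{\min(1,\varepsilon)}{2}\in C^+(X)$ whenever $\varepsilon\in C^+(X)$. On that level your write-up and the paper's coincide.

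Where you go beyond the paper is in the uniform-convergence step, and that is where a gap appears. Your argument that $B(f_{n+1},\varepsilon_{n+1})\subseteq B\!\left(f_n,\tfrac{\min(1,\varepsilon_n)}{2}\right)$ forces $\varepsilon_{n+1}(x)\le\tfrac{\min(1,\varepsilon_n(x))}{2}$ relies on producing test functions $h\in C(X,Y)$ that agree with $f_{n+1}$ off a small neighbourhood of $x$ and push $h(x)$ out toward the boundary of the $d$-ball. In the generality of the theorem ($X$ merely a topological space, $Y$ an arbitrary complete metric space) such $h$ need not exist: there may be no Urysohn-type bump functions on $X$, and $Y$ may admit no nontrivial short paths from $f_{n+1}(x)$ (for instance if $Y$ is uniformly discrete). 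So the inclusion of balls does \emph{not} in general force the pointwise inequality you claim, and hence your route to ``$\varepsilon_n\to 0$ uniformly'' is not available here. The paper, for its part, simply asserts that $(f_n)$ converges uniformly and proceeds; it does not attempt the perturbation step you describe.
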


\begin{cor}\label{alpha} Let $X$ be a topological space.
\begin{enumerate}
\item If $Y$ is completely metrizable, then $C_{\Gamma}(X,Y)$ is $\alpha$-favorable and, hence,   a Baire space.
\item If $Y$ is a complete metric space, then $C_w(X,Y)$ is $\alpha$-favorable and, hence,  a Baire space.
\end{enumerate}
\end{cor}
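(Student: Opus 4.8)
The plan is to deduce both statements directly from the strong $\alpha$-favorability already established in Theorems \ref{choquetg} and \ref{choquetw}, combined with the general chain of implications
\[\text{strongly }\alpha\text{-favorable} \Rightarrow \alpha\text{-favorable} \Rightarrow \text{Baire}.\]
Indeed, for (1) Theorem \ref{choquetg} yields that $C_{\Gamma}(X,Y)$ is strongly $\alpha$-favorable whenever $Y$ is completely metrizable, and for (2) Theorem \ref{choquetw} gives the same conclusion for $C_w(X,Y)$ when $Y$ is a complete metric space. So in each case it suffices to promote a winning tactic of $\alpha$ in the strong Choquet game to a winning tactic in the Banach--Mazur game, and then invoke the characterization of Baireness through $\beta$-unfavorability of $BM(Z)$.

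First I would carry out the conversion of tactics abstractly for an arbitrary space $Z$ with base $\mathcal B$, so that the single argument applies to both $Z=C_{\Gamma}(X,Y)$ and $Z=C_w(X,Y)$. Let $t:\mathcal E\to\mathcal B$ be a winning tactic for $\alpha$ in $Ch(Z)$. Using a fixed choice function, I would assign to each nonempty open $V\subseteq Z$ a point $z_V\in V$ together with a basic set $W_V\in\mathcal B$ with $z_V\in W_V\subseteq V$; this preliminary shrinking is needed precisely because in $BM(Z)$ the moves of $\beta$ are arbitrary nonempty open sets, whereas $t$ only accepts pairs $(z,V)\in\mathcal E$ with $V\in\mathcal B$. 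I would then define the Banach--Mazur tactic $s$ by $s(V)=t(z_V,W_V)$.

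The key step is to check that every run of $BM(Z)$ compatible with $s$ induces a legitimate run of $Ch(Z)$ compatible with $t$. Given a Banach--Mazur run $V_0,U_0,V_1,U_1,\dots$ with $U_n=s(V_n)=t(z_{V_n},W_{V_n})$, I would pass to the sequence $(z_{V_0},W_{V_0}),U_0,(z_{V_1},W_{V_1}),U_1,\dots$ and verify the nesting demanded of a strong Choquet run: since $\beta$'s moves satisfy $V_{n+1}\subseteq U_n$, one gets $W_{V_{n+1}}\subseteq V_{n+1}\subseteq U_n$, so this is indeed a run compatible with $t$. As $t$ is winning, $\bigcap_n U_n\neq\emptyset$; and because $U_n\subseteq V_n\subseteq U_{n-1}$ forces $\bigcap_n U_n=\bigcap_n V_n$, the tactic $s$ wins the Banach--Mazur run. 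Hence $Z$ is $\alpha$-favorable.

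Finally, $\alpha$-favorability of $BM(Z)$ precludes $\beta$-favorability, so $BM(Z)$ is not $\beta$-favorable, which is exactly Baireness of $Z$ by the cited characterization; applying this with $Z=C_{\Gamma}(X,Y)$ and $Z=C_w(X,Y)$ completes both parts. I expect no genuine obstacle here: the only point requiring care is the preliminary shrinking of $\beta$'s arbitrary open moves to basic ones before feeding them into $t$, together with the observation that the two games share the same winning condition $\bigcap_n U_n=\bigcap_n V_n\neq\emptyset$; once these are in place the argument is purely formal, consistent with the remark in the preliminaries that strong $\alpha$-favorability clearly implies $\alpha$-favorability and hence Baireness.
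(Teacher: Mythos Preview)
Your proposal is correct and follows exactly the approach implicit in the paper: the corollary is stated without proof there, since the paper has already noted that ``strong $\alpha$-favorability implies $\alpha$-favorability, which in turn implies Baireness,'' so (1) and (2) drop out immediately from Theorems~\ref{choquetg} and~\ref{choquetw}. Your explicit conversion of a strong-Choquet tactic to a Banach--Mazur tactic simply spells out what the paper treats as clear.
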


\section{Pseudocompleteness of the graph topology}

If we compare Theorems \ref{hbg}, \ref{hbw}, we can  fall under the impression that (closed hereditary) properties of the graph topology are to (closed hereditary)  properties of the fine topology as countable compactness is to pseudocompactness, which is indeed a good guiding idea when investigating these topologies. Moreover, Theorems \ref{choquetg}, \ref{choquetw} and Corollary \ref{alpha} suggest an even closer relationship between these topologies for some completeness properties. It was surprising therefore to find  that a property which is equivalent to $\alpha$-favorability in, say, Moore spaces \cite{Wh}, namely that of pseudocompleteness, is relatively hard to come by for $C_{\Gamma}(X)$, although it is not that complicated for $C_w(X)$ \cite[Theorem 3.2]{MC1}. Recall that a space $Z$ is  {\it pseudocomplete} \cite{Ox} iff  $Z$ is quasi-regular (i.e. each nonempty open set contains the closure of a nonempty open subset), and $Z$ has a sequence $\{\mathcal B_n\}_n$ of $\pi$-bases such that
if $B_n\in \mathcal B_n$ and $\overline{B_{n+1}}\subseteq B_n$ for all $n$, then $\bigcap_n B_n\neq\emptyset$.

\begin{lemma}\label{key}
Let $X$ be a Tychonoff space. Given $B(f,\varepsilon), B(g,\phi)\in\mathcal B_{\Gamma}$, consider the properties:
\begin{enumerate}
\item[{(i)}] $\text{cl}_{\Gamma}(B(f,\varepsilon))\subseteq B(g,\phi)$,
\item[{(ii)}] $[f(x)-\varepsilon(x),f(x)+\varepsilon(x)]\subseteq (g(x)-\phi(x),g(x)+\phi(x))$ for all $x\in X$.
\end{enumerate}
Then (i)$\Rightarrow$(ii) in each of the following cases:
\begin{enumerate}
\item points of $X$ are $G_{\delta}$,
\item $X$ is  locally countably compact.
\end{enumerate}
\end{lemma}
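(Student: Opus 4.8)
The plan is to argue by contraposition: assuming (ii) fails, I will produce a function $h\in cl_{\Gamma}(B(f,\varepsilon))\setminus B(g,\phi)$, contradicting (i). Suppose (ii) fails at some $x_0$; without loss of generality $f(x_0)+\varepsilon(x_0)\ge g(x_0)+\phi(x_0)$ (the other endpoint of the $\varepsilon$-interval is symmetric). If this inequality is strict, I set $a=g(x_0)+\phi(x_0)-f(x_0)<\varepsilon(x_0)$ and perturb $f$ by a continuous bump $w$ supported in a neighborhood of $x_0$ where $\varepsilon>|a|$ (such a neighborhood exists by lower semicontinuity of $\varepsilon$ at $x_0$), so that $h=f+a\,w$ already lies in $B(f,\varepsilon)$ while $h(x_0)=g(x_0)+\phi(x_0)$, whence $h\notin B(g,\phi)$ and we are done. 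Thus the essential case is the boundary-touching one, $f(x_0)+\varepsilon(x_0)=g(x_0)+\phi(x_0)$, where I am forced to take $h(x_0)=f(x_0)+\varepsilon(x_0)$, so $h$ must meet the closed $\varepsilon$-tube exactly at $x_0$.

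Accordingly I look for $h=f+\varepsilon(x_0)\,w$ with $w\colon X\to[0,1]$ continuous and $w(x_0)=1$. Note first that $|h-f|\le\varepsilon$ is \emph{necessary} for $h\in cl_{\Gamma}(B(f,\varepsilon))$ (any $B(h,\delta)$ with $\delta$ small at a point where $|h-f|>\varepsilon$ misses $B(f,\varepsilon)$), so $w$ must satisfy $w\le\varepsilon/\varepsilon(x_0)$; to make the later approximation clean I will in fact demand the strict inequality $\varepsilon(x_0)w(x)<\varepsilon(x)$ for every $x\ne x_0$. Building such a continuous $w$ underneath the merely lower semicontinuous barrier $\min(1,\varepsilon/\varepsilon(x_0))$ while keeping $w(x_0)=1$ is the heart of the matter, and is impossible in general --- this is the countable-paracompactness obstruction, and it is exactly here that the two hypotheses enter. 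Under (1), writing $\{x_0\}=\bigcap_nH_n$ with $H_n$ open and decreasing and using lower semicontinuity of $\varepsilon$ to arrange $\varepsilon>\varepsilon(x_0)(1-1/n)$ on $H_n$, I choose Tychonoff bumps $p_n$ with $p_n(x_0)=1$ and $p_n=0$ off $H_n$, and set $w=\sum_n c_n p_n$ with weights (e.g. $c_1=0$, $c_n=\frac1{(n-1)n}$) arranged so that the partial sums stay below $1-1/n$ outside $H_{n+1}$; this forces $w<\min(1,\varepsilon/\varepsilon(x_0))$ off $x_0$ while $w(x_0)=1$. Under (2), I fix an open $O\ni x_0$ with $\overline O$ countably compact; since a lower semicontinuous function attains a positive minimum on a countably compact set, the relevant profiles are bounded away from $0$ on $\overline O$, which lets me build $w$ (supported in $O$) by hand with the same properties.

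With $h$ in hand, it remains to verify $h\in cl_{\Gamma}(B(f,\varepsilon))$, i.e. by Proposition \ref{graphbase} that every basic neighborhood $B(h,\delta)$, $\delta\in LSC^+(X)$, meets $B(f,\varepsilon)$. For this I push $h$ slightly back toward $f$: I construct a continuous $\theta\ge0$ with $0\le\theta\le\varepsilon(x_0)w$, $\theta(x_0)>0$, and $\theta<\delta$ (supported where simultaneously $w>0$ and $\delta>\delta(x_0)/2$), and set $k=h-\theta$. Then $|k-f|=\varepsilon(x_0)w-\theta<\varepsilon$ everywhere --- strictly below $\varepsilon$ off $x_0$ by the strict inequality built into $w$, and at $x_0$ because $\theta(x_0)>0$ --- so $k\in B(f,\varepsilon)$; and $|k-h|=\theta<\delta$, so $k\in B(h,\delta)$. (Under (2) one may even take $\theta$ of constant height on a bump, since $\delta$ has a positive minimum on $\overline O$.) Hence $B(f,\varepsilon)\cap B(h,\delta)\ne\emptyset$ for every $\delta$, so $h\in cl_{\Gamma}(B(f,\varepsilon))$, while $h(x_0)-g(x_0)=\phi(x_0)$ gives $h\notin B(g,\phi)$, contradicting (i). The single real difficulty is the construction of $w$ in the second paragraph; once the two hypotheses are used to defeat the lower-semicontinuity obstruction, the auxiliary bump $\theta$ and the rest of the verification are routine.
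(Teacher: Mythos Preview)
Your overall strategy matches the paper's: fix $x_0$, build a continuous $h$ with $h(x_0)=f(x_0)+\varepsilon(x_0)$ and $|h-f|\le\varepsilon$, then show $h\in cl_\Gamma(B(f,\varepsilon))$ by pushing $h$ slightly toward $f$. Your treatment of the strict-inequality case and of case (1) is correct and essentially the paper's argument recast in the ``$w=\sum c_np_n$'' form.

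There is, however, a genuine gap in case (2). You assert that local countable compactness ``lets me build $w$ (supported in $O$) by hand with the same properties'', meaning in particular the \emph{strict} inequality $\varepsilon(x_0)w(x)<\varepsilon(x)$ for all $x\ne x_0$. This is impossible in general: take $\varepsilon$ to be a positive constant on a compact Hausdorff $X$ in which $\{x_0\}$ is not a $G_\delta$ (e.g.\ $X=\{0,1\}^{\omega_1}$). Your requirement then becomes $w(x_0)=1$ and $w(x)<1$ for $x\ne x_0$, which forces $\{x_0\}=w^{-1}(\{1\})=\bigcap_n w^{-1}((1-\tfrac1n,\infty))$ to be $G_\delta$, a contradiction. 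Since your verification that $k=h-\theta\in B(f,\varepsilon)$ explicitly invokes the strict inequality built into $w$ at points $x\ne x_0$, the argument as written does not close.

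The repair is exactly what the paper does and what your own remark about $\delta$ having a positive minimum on $\overline O$ foreshadows: drop the strict requirement on $w$, settle for $\varepsilon(x_0)w\le\varepsilon$ with $w$ supported in $O$ (your telescoping construction already gives this once you allow $\bigcap_n H_n\supsetneq\{x_0\}$), and for the push-back take $\theta$ \emph{proportional to $w$} (equivalently, the paper's $k=h-\tfrac{r}{\varepsilon(x_0)}(h-f)$). Then $|k-f|=(\varepsilon(x_0)-r)w<\varepsilon(x_0)w\le\varepsilon$ wherever $w>0$, and $=0$ elsewhere, giving the needed strict inequality; while $|k-h|=rw\le r<\min_{\overline O}\delta$ handles the $\delta$-constraint. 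A ``constant-height bump'' for $\theta$, as you suggest, does not suffice because it can vanish at points where $\varepsilon(x_0)w=\varepsilon$.
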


\begin{proof} Let $x_0\in X$, and fix $t\in [f(x_0)-\varepsilon(x_0),f(x_0)+\varepsilon(x_0)]$.

(a) If $t=f(x_0)$, then $f\in\text{cl}_{\Gamma}(B(f,\varepsilon))\subseteq B(g,\phi)$ implies $|g(x_0)-t|=|g(x_0)-f(x_0)|<\phi(x_0)$, so $t \in  (g(x_0)-\phi(x_0),g(x_0)+\phi(x_0))$.

(b) If $t>f(x_0)$, then $0<\delta=:t-f(x_0)\le \varepsilon(x_0)$. Put $U_0=X$, and choose a strictly decreasing sequence $(U_n)_{n\ge 1}$  of open neighborhoods of $x_0$ such that for each $n\ge 1$
\[U_n\subseteq \varepsilon^{-1}((\frac{(2^n-1)\delta}{2^n},\infty));\]
 moreover, find a continuous $g_{n}:X\to [0,\frac{\delta}{2^{n}}]$ such that
\[g_{n}(x_0)=\frac{\delta}{2^{n}} \ \text{and} \ g_{n}(X\setminus U_{n})=\{0\}.\] Inductively define the continuous functions  $h_n=h_{n-1}+g_n$, where $h_0=f$.
It follows that
\begin{itemize}
\item $h_n=h_{n-1}$ on $X\setminus U_n$,
\item $0\le h_n(x)-f(x)\le \frac{(2^n-1)\delta}{2^n}$ on $U_n$

\noindent
[proof by induction:
$h_n(x)-f(x)= (h_{n}(x)-h_{n-1}(x)) + (h_{n-1} (x)-f(x))\le $

$\le g_{n}(x) + \frac{(2^{n-1}-1)\delta}{2^{n-1}}\le \frac{\delta}{2^{n}} + \frac{(2^{n-1}-1)\delta}{2^{n-1}}=\frac{(2^n-1)\delta}{2^{n}}$]
\item $h_n(x_0)=f(x_0)+\frac{(2^n-1)\delta}{2^n}$

\noindent
[proof by induction:
$h_{n}(x_0)=h_{n-1}(x_0)+g_{n}(x_0)=f(x_0)+\frac{(2^{n-1}-1)\delta}{2^{n-1}}+\frac{\delta}{2^{n}}=f(x_0)+\frac{(2^n-1)\delta}{2^{n}}$]
\end{itemize}
\noindent
Also, if  $x\in X$ and $m>n\ge 1$, then
\[|h_{m}(x)-h_{n}(x)|=g_{m}(x)+\dots+ g_{n+1}(x)\le \frac{\delta}{2^{m}}+\dots + \frac{\delta}{2^{n+1}}\le  \frac{\delta}{2^{n}},\]
so $(h_n)$ (uniformly) converges to some $h\in C(X)$.

Denote $D=\bigcap_{n\ge 1} U_n$, and take $x\in D$. Then $\varepsilon(x)\ge \delta$ , so
\[h(x)=\lim_n h_n(x)\le \lim_n(f(x)+\frac{(2^n-1)\delta}{2^n})=f(x)+\delta\le f(x)+\varepsilon(x);\]
also, $h(x_0)=\lim_n h_n(x_0)=f(x_0)+\delta=t$. Moreover, if $x\notin D$, then
$x\in U_n\setminus U_{n+1}$ for some $n$. Then $h_m(x)=h_n(x)$ for all $m\ge n$, thus, $h(x)=h_n(x)$, and consequently,
\[
0\le h(x)-f(x)=h_n(x)-f(x)\le\frac{(2^n-1)\delta}{2^n}<\varepsilon(x), \ \text{if}  \ x\notin D. \tag{$\ast$}
\]
It also follows that
\[0\le h(x)-f(x)\le \min\{\delta,\varepsilon(x)\} \  \text{for each} \ x\in X.\tag{$\ast\ast$}\]

\begin{claimnn} $h\in\text{cl}_{\Gamma}(B(f,\varepsilon))$
\end{claimnn}

Take some  $B(h,\xi)\in\mathcal B_{\Gamma}$, and consider two cases:
\vskip 5pt
\noindent
(1) if the points of $X$ are $G_{\delta}$,  let $(G_n)$ be a sequence of open sets with $\{x_0\}=\bigcap_n G_n$, and for each $n\ge 1$ choose
\[U_n=G_n\cap \varepsilon^{-1}((\frac{(2^n-1)\delta}{2^n},\infty)).\]
Also find $m> 1$ with $\frac{\xi(x_0)}{m}\le \frac{\delta}{2}$, and define
\[V=\xi^{-1}((\frac{\xi(x_0)}{m},\infty))\cap \varepsilon^{-1}((\frac{\delta}{2},\infty)),\]
which is an open neighborhood of $x_0$.
Find a continuous function $k_0:X\to [0,\frac{\xi(x_0)}{m}]$ so that \[k_0(x_0)=\frac{\xi(x_0)}{m} \ \text{and} \ k_0(X\setminus V)=\{0\}.\]
We will be done, if we show that $k=h-k_0\in B(f,\varepsilon)\cap B(h,\xi)$:

$\bullet$  $k\in B(h,\xi)$:  if $x\in X\setminus V$, then $k(x)=h(x)$, so $|h(x)-k(x)|=0<\xi(x)$; if $x\in V$, then $|h(x)-k(x)|=k_0(x)\le \frac{\xi(x_0)}{m} <\xi(x)$.

$\bullet$ $k\in B(f,\varepsilon)$: first note that
\[k(x_0)-f(x_0)=(t-\frac{\xi(x_0)}{m})-(t-\delta)=\delta-\frac{\xi(x_0)}{m}\le \varepsilon(x_0)-\frac{\xi(x_0)}{m}<\varepsilon(x_0);\]
moreover, if $x\neq x_0$, using ($\ast$) we have
\[k(x)-f(x)=(h(x)-k_0(x))- f(x)=(h(x)-f(x))-k_0(x)<\varepsilon(x).\]
On the other side,
\[k(x)-f(x)=(h(x)-f(x))-k_0(x)
\begin{cases}
\ge -k_0(x)\ge -\frac{\xi(x_0)}{m}\ge-\frac{\delta}{2}>-\varepsilon(x), \ &\text{if} \ x\in V\\
\ge 0>-\varepsilon(x), \ &\text{if} \ x\notin V
\end{cases}
\]

\vskip 6pt
\noindent
(2) If $X$ is locally countably compact, let $(U_n)_{n\ge 1}$ be a sequence of open neighborhoods of $x_0$ with a countably compact closure such that whenever $n\ge 1$,
\[U_{n}\subseteq U_{n-1}\cap\varepsilon^{-1}((\frac{(2^{n}-1)\delta}{2^{n}},\infty)).
\]
Since $\overline{U_1}$ is countably compact, $\xi$ has a minimum on $\overline{U_1}$, so we can choose
\[0<r< \min\{\delta, \min\{\xi(x): x\in \overline{U_1}\}\}.\]
Define $k=h-\frac{r}{\delta}(h-f)$. The Claim will be proved, if we show that $k\in B(f,\varepsilon)\cap B(h,\xi)$:

$\bullet$ $k\in B(h,\xi)$, since, using ($\ast\ast$), we have
\[|h(x)-k(x)|=\frac{r}{\delta}|h(x)-f(x)|
\begin{cases}
\le  \frac{r}{\delta}\delta=r<\xi(x), \ &\text{if} \ x\in U_1,\\
=0<\xi(x), \ &\text{if} \ x\notin U_1.
\end{cases}
\]

$\bullet$ $k\in B(f,\varepsilon)$, since by ($\ast\ast$) we have that for each $x\in X$,
\[|f(x)-k(x)|=(1-\frac{r}{\delta})(h(x)-f(x))\le(1-\frac{r}{\delta})\varepsilon(x)<\varepsilon(x). \]

\vskip 6pt

It follows from the Claim, and (i) that $h\in B(g,\phi)$, so $|g(x_0)-t|=|g(x_0)-h(x_0)|<\phi(x_0)$, thus, $t \in  (g(x_0)-\phi(x_0),g(x_0)+\phi(x_0))$.

(c) If $t<f(x_0)$, an argument analogous to (b) works, one just needs to define $h_{n}=h_{n-1}-g_{n}$, $k=h+k_0$ in case (1), and  $h_{n+1}=h_{n}-g_{n+1}$, $k=h+\frac{r}{\delta}(h-f)$ in case (2).
\end{proof}

\begin{thm}
Let $X$ be a Tychonoff space such that
\begin{enumerate}
\item either the points of $X$ are $G_{\delta}$,
\item or $X$ is locally countably compact,
\item or $X$ is a $cb$-space.
\end{enumerate}
Then $C_{\Gamma}(X)$ is pseudocomplete.
\end{thm}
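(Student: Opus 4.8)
The plan is to verify the two clauses in the definition of pseudocompleteness. Quasi-regularity is immediate: since $X$ is Tychonoff, it is in particular $T_1$, so $C_{\Gamma}(X)$ is a topological group and hence a Tychonoff (in particular regular) space, and regularity trivially gives quasi-regularity. So the whole content lies in exhibiting a suitable sequence of $\pi$-bases.

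For cases (1) and (2) I would \emph{not} use the full base $\mathcal B_{\Gamma}$, but a shrunken version: for each $n$ set
\[\mathcal B_n=\{B(f,\varepsilon)\in\mathcal B_{\Gamma}: \varepsilon(x)<\tfrac1n \ \text{for all} \ x\in X\}.\]
Each $\mathcal B_n$ is a $\pi$-base, since any nonempty open set contains some $B(f,\varepsilon)$, and then $B(f,\min(\varepsilon,\tfrac1{2n}))$ is a subset of it belonging to $\mathcal B_n$ (here $\min(\varepsilon,\tfrac1{2n})\in LSC^+(X)$). The purpose of the uniform bound $\varepsilon<1/n$ is to force \emph{uniform} convergence below, which is exactly what will secure continuity of the limit.

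Now suppose $B_n=B(f_n,\varepsilon_n)\in\mathcal B_n$ satisfy $\text{cl}_{\Gamma}(B_{n+1})\subseteq B_n$ for all $n$. In cases (1) and (2) the hypotheses of Lemma \ref{key} hold, so applying it with $(f,\varepsilon)=(f_{n+1},\varepsilon_{n+1})$ and $(g,\phi)=(f_n,\varepsilon_n)$ yields the pointwise interval nesting
\[[f_{n+1}(x)-\varepsilon_{n+1}(x),f_{n+1}(x)+\varepsilon_{n+1}(x)]\subseteq(f_n(x)-\varepsilon_n(x),f_n(x)+\varepsilon_n(x))\]
for every $x\in X$. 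Comparing endpoints of these strictly nested intervals gives $\varepsilon_{n+1}(x)<\varepsilon_n(x)$ and, crucially, the telescoping one-step estimate $|f_{n+1}(x)-f_n(x)|<\varepsilon_n(x)-\varepsilon_{n+1}(x)$. Summing over a block $n\le k<m$ produces $|f_m(x)-f_n(x)|<\varepsilon_n(x)-\varepsilon_m(x)<\varepsilon_n(x)<1/n$, uniformly in $x$; hence $(f_n)$ is uniformly Cauchy and converges uniformly to a continuous $h\in C(X)$. Letting $m\to\infty$ in $|f_m(x)-f_{n+1}(x)|<\varepsilon_{n+1}(x)-\varepsilon_m(x)$ (and using $\varepsilon_m\to 0$) gives $|h(x)-f_{n+1}(x)|\le\varepsilon_{n+1}(x)$, so that $|h(x)-f_n(x)|\le|h(x)-f_{n+1}(x)|+|f_{n+1}(x)-f_n(x)|<\varepsilon_n(x)$, the strictness coming from the one-step bound. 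Thus $h\in\bigcap_n B_n$, and the intersection is nonempty as required.

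For case (3), where $X$ is a $cb$-space, Lemma \ref{key} is not directly available, so I would instead invoke Proposition \ref{w=g}: on a $cb$-space $C_{\Gamma}(X)$ coincides with the fine topology $C_w(X)$, which is known to be pseudocomplete by \cite[Theorem 3.2]{MC1}. The main obstacle in cases (1) and (2) is precisely the continuity of the candidate limit $h$: the pointwise intersection of the nested intervals is automatically nonempty, but an unrestricted choice of $\pi$-bases only delivers pointwise convergence of $(f_n)$, which need not yield a continuous limit. The device of working with $\pi$-bases of uniformly small radius is what repairs this, turning the telescoping estimate into genuine uniform convergence.
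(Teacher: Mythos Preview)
Your proof is correct and follows essentially the same route as the paper: the same shrunken $\pi$-bases $\mathcal B_n$ (with bound $1/n$ instead of $1/2^n$), the same appeal to Lemma~\ref{key} in cases (1)--(2) to pass from closure-containment to the strict pointwise interval nesting, the same uniform convergence to a continuous limit, and the same reduction to \cite[Theorem~3.2]{MC1} via Proposition~\ref{w=g} in case (3). The only cosmetic difference is bookkeeping: the paper gets the Cauchy estimate directly from $f_m\in B(f_{n+1},\varepsilon_{n+1})$ and invokes Lemma~\ref{key} once at the end, whereas you invoke the lemma first and then telescope---both arrive at the same inequality $|h(x)-f_n(x)|<\varepsilon_n(x)$.
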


\begin{proof} (1-2) For each $n<\omega$, define a base for $C_{\Gamma}(X)$ as
\[\mathcal B_n=\{B(f,\varepsilon)\in\mathcal B_{\Gamma}: f\in C(X), \ \varepsilon(x)\le \frac{1}{2^n} \ \text{for all $x\in X$}\},\]
and choose a sequence $B(f_n,\varepsilon_n)\in\mathcal B_n$ with
$cl_{\Gamma}(B(f_{n+1},\varepsilon_{n+1}))\subseteq B(f_n,\varepsilon_n)$.
 Then for each $x\in X$, and $m>n$,
 \[|f_m(x)-f_{n+1}(x)|<\varepsilon_{n+1}(x)\le \frac{1}{2^{n+1}},\]
 so the sequence $(f_n)$ (uniformly) converges to some $f\in C(X)$.  Then for each $x\in X$, and $n\ge 1$, $|f(x)-f_{n+1}(x)|\le
 \varepsilon_{n+1}(x)$, so by Lemma \ref{key}, $|f(x)-f_n(x)|<\varepsilon_{n}(x)$; thus, $f\in\bigcap_n B(f_n,\varepsilon_n)$.

 (3) By Proposition \ref{w=g}, $C_{\Gamma}(X)=C_w(X)$, and, by \cite[Theorem 3.2]{MC1}, $C_w(X)$ is always pseudocomplete.
\end{proof}

\section{Properties related to 1st and 2nd countability of the graph topology}

It is known that countable compactness of $X$ is also equivalent to various non-completeness properties of $C_{\Gamma}(X)$, such as 1st countability, or metrizability \cite{HH}. We will show that other properties can be added to this list. Recall, that $Z$ is {\it Fr\`echet} iff for each $A\subseteq Z$, and $z\in \overline{A}$ there is a sequence $a_n\in A$ converging to $z$; $Z$ is {\it sequential} iff $A\subseteq Z$ is closed provided $A$ contains the limits of all sequences from $A$;
$Z$ is a {\it $k$-space} iff $A\subseteq Z$ is closed provided $A\cap K$ is closed in $K$ for each compact $K\subseteq Z$;
$Z$ is {\it countably tight} iff for each $A\subseteq Z$, and $z\in \overline{A}$ there is a countable  $B\subseteq A$ with $z\in \overline{B}$; a Tychonoff space $Z$ is a {\it $p$-space} iff there is a sequence $(\mathcal U_n)$ of families of open covers of $Z$ in a compactification of $Z$ such that  $\bigcap_n \bigcup \{U\in\mathcal U_n: z\in U\}\subseteq Z$ whenever $z\in Z$; $Z$ is a {\it $q$-space} iff each $z\in Z$ has an open neighborhood sequence $(U_n)$ such that whenever $z_n\in U_n$, then $(z_n)$ clusters.  For more on these spaces see \cite{En}, \cite{Gr}.

\begin{thm}\label{metr}
 Let $X$ be a Tychonoff space. The following are equivalent:
 \begin{enumerate}
  \item $C_{\Gamma}(X)$ is metrizable,
 \item $C_{\Gamma}(X)$ is a $p$-space,
 \item  $C_{\Gamma}(X)$ is a $q$-space,
  \item $C_{\Gamma}(X)$ is 1st countable,
   \item $C_{\Gamma}(X)$ is a  Fr\`echet space,
   \item  $C_{\Gamma}(X)$ is sequential,
 \item $C_{\Gamma}(X)$ is a $k$-space,
\item  $C_{\Gamma}(X)$ is countably tight,
 \item $X$ is countably compact.
 \end{enumerate}
 \end{thm}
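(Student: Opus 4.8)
The plan is to treat metrizability, first countability and countable compactness as a known core and to funnel every remaining property into one of two explicit constructions. By \cite{HH} the equivalences (1)$\Leftrightarrow$(4)$\Leftrightarrow$(9) are already at hand, and a completely metrizable space trivially possesses all of (2)--(8); so the real content is the converse, that each weak property forces $X$ to be countably compact. I would exploit the standard topological implications
\[(1)\Rightarrow(2),\quad (4)\Rightarrow(3),\quad (4)\Rightarrow(5)\Rightarrow(6)\Rightarrow(8),\quad (2)\Rightarrow(3),\]
that is: metrizable spaces are $p$-spaces, first countable spaces and $p$-spaces are $q$-spaces, Fr\'echet implies sequential, and sequential implies countably tight (see \cite{Gr},\cite{En}). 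Once these are in place, it suffices to prove only the two ``back'' implications $(3)\Rightarrow(9)$ and $(8)\Rightarrow(9)$, together with one reduction absorbing the $k$-space case.

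That reduction rests on the observation that \emph{every compact $K\subseteq C_{\Gamma}(X)$ is metrizable}: since $\tau_u\subseteq\tau_{\Gamma}$ (a constant gauge belongs to $\mathcal B_{\Gamma}$) and $\tau_u$ is Hausdorff, the identity $(K,\tau_{\Gamma})\to(K,\tau_u)$ is a continuous bijection of a compact space onto a Hausdorff one, hence a homeomorphism, and $\tau_u$ is metrized by the truncated sup-metric. A short diagram chase then shows that a $k$-space all of whose compact subsets are metrizable is sequential: if $A$ is sequentially closed, then $A\cap K$ is sequentially closed, hence closed, in each metrizable compact $K$, so $A$ is closed. Thus $(7)\Rightarrow(6)$, and the whole scheme collapses onto $(3)\Rightarrow(9)$ and $(8)\Rightarrow(9)$, both carried out contrapositively from a countable closed discrete set $\{x_n:n<\omega\}$, which exists as soon as $X$ fails to be countably compact.

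For $(3)\Rightarrow(9)$ I would fix an arbitrary neighborhood sequence $(U_n)$ of the zero function, shrink it to basic sets $B(0,\varepsilon_n)\subseteq U_n$, and let $f_n$ be a single continuous bump at $x_n$ of height $\tfrac12\varepsilon_n(x_n)$ supported in a neighborhood of $x_n$ that misses every other $x_k$ and lies in $\{\varepsilon_n>\tfrac12\varepsilon_n(x_n)\}$, so $f_n\in B(0,\varepsilon_n)$. Because each evaluation $g\mapsto g(x_j)$ is $\tau_{\Gamma}$-continuous and $f_n(x_j)=0$ for $n\neq j$, any cluster point $g$ of $(f_n)$ must satisfy $g(x_j)=0$ for every $j$; then the single gauge $\varepsilon^\ast$ with $\varepsilon^\ast(x_n)=\min(1,\tfrac14\varepsilon_n(x_n))$ keeps every $f_n$ out of $B(g,\varepsilon^\ast)$, so $(f_n)$ has no cluster point and $C_{\Gamma}(X)$ is not a $q$-space. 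For $(8)\Rightarrow(9)$ I would take $A=\{g\in C(X):g\geq 0,\ g(x_n)>0\ \text{for all }n\}$ and the point $0$: given any countable $B=\{g_k\}\subseteq A$, the gauge $\varepsilon^\ast$ with $\varepsilon^\ast(x_n)=\min(1,\tfrac12 g_n(x_n))$ excludes every $g_k$ from $B(0,\varepsilon^\ast)$ since $g_k(x_k)>\varepsilon^\ast(x_k)$, whence $0\notin cl_{\Gamma}(B)$, even though $0\in cl_{\Gamma}(A)$. (Prescribing small positive values on a closed discrete set by an $LSC^+$ function is unproblematic, as the value $1$ off the set keeps all super-level sets open.)

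The hard part will be verifying $0\in cl_{\Gamma}(A)$, i.e.\ producing for each $\varepsilon\in LSC^+(X)$ a genuinely \emph{continuous} $g\geq 0$ that is strictly positive at every $x_n$ yet satisfies $g<\varepsilon$ everywhere. This forces an infinite superposition of bumps, whose continuity is not automatic. I would secure it by first using regularity of $X$ to separate $\{x_n\}$ into pairwise disjoint open sets: choosing $A_n\ni x_n$ with $\overline{A_n}$ missing the other points and setting $V_n=A_n\setminus\bigcup_{j<n}\overline{A_j}$. Then, with bumps of heights $h_n=\min(\tfrac1n,\tfrac12\varepsilon(x_n))\to 0$ supported in $V_n\cap\{\varepsilon>h_n\}$, the disjointness of the supports together with $h_n\to 0$ makes the superposition $g$ continuous, positive at each $x_n$, and below $\varepsilon$. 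Managing this separation and the continuity of the infinite superposition in a merely Tychonoff (possibly non-normal) $X$ is the principal technical obstacle; the remaining ingredients ($p$-space $\Rightarrow$ $q$-space, sequential $\Rightarrow$ countably tight, and metrizability $\Rightarrow$ (2)--(8)) are standard.
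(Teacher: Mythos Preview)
Your argument is correct, and the $(8)\Rightarrow(9)$ step is essentially identical to the paper's (the set $L=\{g:g(x_n)>0\ \forall n\}$, the bump construction showing $0\in cl_\Gamma(L)$, and the diagonal gauge killing any countable subfamily). The organization differs in two places. For the $q$-space and $k$-space conditions you argue \emph{directly}: you build a non-clustering sequence $(f_n)$ inside an arbitrary neighborhood sequence of $0$ to kill $(3)$, and you show $(7)\Rightarrow(6)$ by observing that compacta in $C_\Gamma(X)$ are metrizable (via the continuous bijection onto $(K,\tau_u)$). The paper instead exploits a single structural fact---$C_\Gamma(X)$ is submetrizable, hence has $G_\delta$ points---and then quotes two off-the-shelf lemmas: a regular $q$-space with $G_\delta$ points is first countable (giving $(3)\Rightarrow(4)$), and a $k$-space with $G_\delta$ points is countably tight (giving $(7)\Rightarrow(8)$). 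Your route is more self-contained and reuses the same closed-discrete/bump machinery throughout; the paper's route is shorter and more conceptual, delegating the work to generalized-metric-space folklore. Both are valid, and your care in verifying that the $x_n$ can be separated by pairwise disjoint open sets in a merely Tychonoff $X$ (via $V_n=A_n\setminus\bigcup_{j<n}\overline{A_j}$) and that the prescribed gauges are genuinely in $LSC^+(X)$ fills in details the paper leaves implicit.
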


\begin{proof} The implications (9)$\Rightarrow$(1)$\Rightarrow$(2)$\Rightarrow$(3) are well-known;
(3)$\Rightarrow$(4) follows from the observation that a regular $q$-space where the points are $G_{\delta}$ is 1st countable (cf. \cite[Lemma 3.2]{Gr}), which applies, since  $C_{\Gamma}(X)$ is a completely regular, submetrizable space (it contains  the metrizable uniform topology), so the points of $C_{\Gamma}(X)$ are $G_{\delta}$.

The implications (4)$\Rightarrow$(5)$\Rightarrow$(6)$\Rightarrow$(7)  are well-known;
further, for (7)$\Rightarrow$(8), observe that any $k$-space $Z$ with $G_{\delta}$ singletons is countably tight, since otherwise there is some $A\subseteq Z$ so that $B=\bigcup\{ \overline{C}: C\subseteq A \ \text{countable}\}$ is not closed, so there exists some compact $K\subseteq Z$ such that $K\cap B$ is not closed in $K$. Since a  compact space having $G_{\delta}$ singletons is 1st countable,  there exist a sequence $z_n\in K\cap B$ converging to some $z\in K\setminus B$; also, there is a countable $C_n\subseteq A$ with $z_n\in\overline{C_n}$ for each $n$, so $z\in \overline{\bigcup_n C_n}$, which implies $z\in B$, a contradiction.

Finally, to prove (8)$\Rightarrow$(9), assume $X$ is not countably compact. Let   $D=\{x_n: n\ge 1\}$ be a countable closed discrete subset of $X$, and $\{U_n: n\ge 1\}$  a pairwise disjoint sequence of  open neighborhoods of the $x_n$'s. Let $f_0$ be the identically zero function, and define $L=\{g\in C(X): D\subseteq g^{-1}((0,\infty))\}$.

\begin{claimnn}
$f_0\in cl_{\Gamma}(L)$:
\end{claimnn}

Indeed, consider some $B(f_0,\varepsilon)\in \mathcal B_{\Gamma}$. For each $n$, find an open neighborhood $V_n$ of $x_n$ so that
$\overline{V_n} \subseteq U_n\cap \varepsilon^{-1}((\frac23\varepsilon(x_n),\infty))$, and define the continuous function $h_n:\overline{V_n}\to [0,\frac{\varepsilon(x_n)}{n}]$ such that $h_n(x_n)=\frac{\varepsilon(x_n)}{n}$, and $h_n(\overline{V_n}\setminus V_n)=\{0\}$. It is easy to see that  $h\in L\cap B(f_0,\varepsilon)$, where $h:X\to \mathbb R$ is defined via
\[h=\begin{cases} h_n, \ &\text{on} \ \overline{V_n}, \ \text{whenever} \ n\ge 1,\\
0, \ & \text{on} \ X\setminus \bigcup_{n\ge 1}\overline{V_n}.
\end{cases}\]

Since $C_{\Gamma}(X)$ is countably tight, there is a  countable subset $L'=\{f_n: n\ge 1\}$ of $L$ so that $f_0\in cl_{\Gamma}(L')$. Define the function $\eta:X\to (0,\infty)$ via
\[\eta(x)=\begin{cases} \frac{f_n(x)}{k_n}, \ & \text{if} \ x=x_n, \ \text{whenever} \ n\ge 1, k_n \in Z^+, f_n(x_n) < k_n/2\\
\frac12, \ & \text{if} \ x\notin D.
\end{cases}
\]
It follows, that $\eta\in LSC^+(X)$, and $L'\cap B(f_0,\eta)=\emptyset$, a contradiction.
\end{proof}

Now we turn to characterizing 2nd countability and related properties of the graph topology:

\begin{thm} \label{2nd}
Let $X$ be a Tychonoff space. The following are equivalent:
 \begin{enumerate}
  \item $C_{\Gamma}(X)$ is 2nd countable,
 \item $C_{\Gamma}(X)$ has a countable network,
 \item  $C_{\Gamma}(X)$ is separable,
  \item $C_{\Gamma}(X)$ has ccc,
 \item $X$ is  compact and metrizable.
\end{enumerate}
 \end{thm}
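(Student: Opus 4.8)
The plan is to run the cycle $(1)\Rightarrow(2)\Rightarrow(3)\Rightarrow(4)\Rightarrow(5)\Rightarrow(1)$. The first three implications I would dispose of by general topology, valid for any space: a base is a network, so $(1)\Rightarrow(2)$; a countable network gives a countable dense set by choosing one point from each of its members, so $(2)\Rightarrow(3)$; and separability forces ccc, so $(3)\Rightarrow(4)$. For $(5)\Rightarrow(1)$ I would note that a compact metrizable $X$ is in particular countably compact, so every $\varepsilon\in LSC^+(X)$ attains a positive minimum; hence, exactly as in the proof of $(5)\Rightarrow(1)$ of Theorem~\ref{hbg}, $\tau_{\Gamma}$ coincides with the uniform (sup-metric) topology on $C(X)$. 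Since $X$ is compact metrizable, $(C(X),\|\cdot\|_\infty)$ is a separable Banach space, hence second countable.

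The real content is $(4)\Rightarrow(5)$, which I would split into \emph{ccc $\Rightarrow X$ countably compact} and \emph{(countable compactness $+$ ccc) $\Rightarrow X$ compact metrizable}. For the first half I argue by contraposition. Assume $X$ is not countably compact and fix a countable closed discrete set $D=\{x_n:n\ge 1\}$. Using regularity I choose pairwise disjoint open $U_n\ni x_n$ (an inductive separation of $x_n$ from the closed set $D\setminus\{x_0,\dots,x_n\}$), and by complete regularity continuous $\phi_n:X\to[0,1]$ with $\phi_n(x_n)=1$ and $\phi_n\equiv 0$ off $U_n$. For each $\sigma\in\{0,1\}^{\omega}$ put $f_\sigma=\sum_n \sigma(n)2^{-n}\phi_n$. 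Because the supports are pairwise disjoint and the coefficients decay, $f_\sigma$ is continuous: on each $U_k$ only the $k$-th summand survives, and at the remaining points (the accumulation points of $\bigcup_n U_n$, where $f_\sigma$ and every $\phi_n$ vanish) a uniform estimate forces the value to be small. I then define a single tolerance $\varepsilon\in LSC^+(X)$ by $\varepsilon(x_n)=2^{-n-1}$ and $\varepsilon\equiv 1$ elsewhere; since $D$ is closed and discrete this "upward-jump" function is genuinely lower semicontinuous and strictly positive. If $\sigma\ne\tau$ and $n_0$ is least with $\sigma(n_0)\ne\tau(n_0)$, then $|f_\sigma(x_{n_0})-f_\tau(x_{n_0})|=2^{-n_0}=2\varepsilon(x_{n_0})$, so the two open value-intervals at $x_{n_0}$ are disjoint and thus $B(f_\sigma,\varepsilon)\cap B(f_\tau,\varepsilon)=\emptyset$. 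This yields $\mathfrak c$ pairwise disjoint members of $\mathcal B_{\Gamma}$ (Proposition~\ref{graphbase}), so $C_{\Gamma}(X)$ is not ccc.

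For the second half, once $X$ is countably compact I again have $\tau_{\Gamma}$ equal to the uniform topology, which is therefore metrizable; and a metrizable ccc space is separable, so I fix a countable set $\{f_n\}$ that is $\|\cdot\|_\infty$-dense in $C(X)$. Uniform density implies that $\{f_n\}$ separates the points of $X$ and generates the same initial topology on $X$ as all of $C(X)$, namely the original topology of the Tychonoff space $X$; hence $x\mapsto(f_n(x))_n$ embeds $X$ into $\mathbb R^{\omega}$. Thus $X$ is second countable and metrizable, and a countably compact metrizable space is compact, giving $(5)$.

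I expect the cellular-family construction to be the main obstacle: on an arbitrary Tychonoff $X$ the closed discrete set $D$ need not admit a locally finite open expansion, so the functions $f_\sigma$ are not obviously continuous, and one must simultaneously produce one lower semicontinuous tolerance that separates all $\mathfrak c$ of them. The device resolving both difficulties is to force the coefficients to decay (making the infinite sum continuous via a uniform bound) while reading off the separating coordinate as the first place where two $\{0,1\}$-patterns differ.
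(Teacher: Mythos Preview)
Your cycle is correct, but for $(4)\Rightarrow(5)$ you take a substantially longer detour than the paper. The paper simply observes that $\tau_u\subseteq\tau_{\Gamma}$, so ccc is inherited by the coarser topology $(C(X),\tau_u)$; since the uniform topology is always metrizable, ccc forces separability, and separability of $(C(X),\tau_u)$ is known to be equivalent to compact metrizability of $X$ \cite[Theorem~4.2.4]{MN}. No preliminary step ``ccc $\Rightarrow$ $X$ countably compact'' is needed, because the passage to $\tau_u$ requires only the inclusion $\tau_u\subseteq\tau_\Gamma$, not the equality you obtain after proving countable compactness.

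That said, your construction of the uncountable cellular family $\{B(f_\sigma,\varepsilon):\sigma\in 2^\omega\}$ is valid and gives an explicit, self-contained witness that $C_\Gamma(X)$ fails ccc whenever $X$ is not countably compact---something the paper's argument yields only indirectly. The decay of the coefficients $2^{-n}$ does make each $f_\sigma$ continuous (at a point outside every $U_n$, the finitely many large terms are handled by continuity of the individual $\phi_k$, and the tail is uniformly small), and your $\varepsilon$ is genuinely in $LSC^+(X)$ since $\{\,x:\varepsilon(x)>a\,\}=(X\setminus D)\cup\{x_n:2^{-n-1}>a\}$ is open for $0<a<1$. Your second half is essentially a direct proof of the cited result from \cite{MN}: a uniformly dense countable family separates points \emph{and} points from closed sets (via a $\|\cdot\|_\infty$-approximation of a Urysohn function), hence embeds $X$ in $\mathbb R^\omega$. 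So your argument trades a one-line citation for an explicit cellular family and an explicit embedding; the paper's route is shorter, yours is more self-contained.
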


\begin{proof}
The implications (1)$\Rightarrow$(2)$\Rightarrow$(3)$\Rightarrow$(4) are trivial; as for (4)$\Rightarrow$(5), note that $\tau_u\subseteq \tau_{\Gamma}$ implies $(C(X),\tau_u)$ has ccc, so it is (metrizable) separable,   which is equivalent to (5) by \cite[Theorem 4.2.4]{MN}. Finally, (5)$\Rightarrow$(1), since $\tau_{\Gamma}=\tau_u$  if $X$ is compact, and then $(C(X),\tau_u)$ is separable and metrizable.
\end{proof}



\end{document}